\documentclass[a4paper,11pt]{amsart}
\usepackage[foot]{amsaddr}
\title{The comonotone flow of a stochastically monotone Feller process on the real line}
\email{jberard@unistra.fr / brieuc.frenais@univ-lorraine.fr}
\address{Institut de Recherche Math\'ematique Avanc\'ee, UMR 7501, Universit\'e de Strasbourg and Institut Elie Cartan de Lorraine, UMR 7502, Université de Lorraine}
\author{Jean B\'erard and Brieuc Fr\'enais}

\usepackage{ae,lmodern}
\usepackage[utf8]{inputenc}
\usepackage[T1]{fontenc}
\usepackage{fullpage}

\usepackage{enumitem}

\usepackage{amsmath}			
\usepackage{amssymb}			
\usepackage{mathrsfs}
\usepackage{stix}
\usepackage{bbm}
\usepackage{graphicx}
\usepackage{subcaption}

\usepackage{stmaryrd}			

\usepackage{imakeidx}

\usepackage{hyperref}	

\usepackage{xcolor}
\usepackage{blkarray}

\newtheorem{proposition}{Proposition}
\newtheorem{theorem}{Theorem}
\newtheorem{lemma}{Lemma}
\newtheorem{corollary}{Corollary}

\newtheorem{remark}{Remark}



\newcommand{\BB}{\mathcal{B}}

\newcommand{\CC}{\mathcal{C}}

\newcommand{\E}{\mathbb{E}}

\renewcommand{\i}{\mathrm{i}}

\renewcommand{\P}{\mathbb{P}}

\newcommand{\Q}{\mathbb{Q}}

\newcommand{\R}{\mathbb{R}}

\newcommand{\s}{\sigma}

\newcommand{\Z}{\mathbb{Z}}

\newcommand{\SM}{\mathcal{SM}}
\newcommand{\Lip}{\mbox{Lip}}
\newcommand{\sLip}{\mbox{\scriptsize Lip}}


\newcommand\sm{\leq_{\mathbf{sm}}}
\newcommand\st{\leq_{\mathbf{st}}}



\renewcommand{\bar}{\overline}

\newcommand{\ent}[2]{\llbracket #1,#2\rrbracket}

\renewcommand{\hat}{\widehat}
\renewcommand{\check}{\widecheck}

\newcommand{\n}[2]{\left\|#1\right\|_{#2}}

\renewcommand{\Tilde}{\widetilde}


\newcommand{\thref}[1]{\hyperref[#1]{Theorem \ref{#1}}}
\newcommand{\defiref}[1]{\hyperref[#1]{Definition \ref{#1}}}
\newcommand{\figref}[1]{\hyperref[#1]{Figure \ref{#1}}}
\newcommand{\propref}[1]{\hyperref[#1]{Proposition \ref{#1}}}
\renewcommand{\eqref}[1]{\hyperref[#1]{(\ref{#1})}}
\newcommand{\chapref}[1]{\hyperref[#1]{Chapter \ref{#1}}}
\newcommand{\lemref}[1]{\hyperref[#1]{Lemma \ref{#1}}}
\newcommand{\cororef}[1]{\hyperref[#1]{Corollary \ref{#1}}}
\newcommand{\secref}[1]{\hyperref[#1]{Section \ref{#1}}}

\newcommand{\un}{\mathbf{1}}


\begin{document}

\maketitle

\begin{abstract}
  We show that any stochastically monotone Feller semigroup on $\mathbb R$ can be extended by a consistent family of order-preserving Feller semigroups on the successive powers of $\mathbb R$. We exhibit a specific such family, which is uniquely characterized by a maximality property with respect to the super-modular order on $\mathbb R^n$. A consequence is that, in this fairly general setting, there always exists a coupling between $n$ càdlàg versions of the underlying Markov process starting from $n$ distinct initial positions, which do not cross one another.
\end{abstract}

\section{Introduction}

Consider a Markov semigroup $(P_t)_{t \in \R_+}$ whose state space is the real line $\R$, with the mild additional regularity assumption that $(P_t)_{t \in \R_+}$ enjoys the Feller property (see Sections \ref{ss:Markov-kernels} and \ref{ss:Feller-semigroup} below for formal definitions). For any $x \in \R$, one can then define a collection of real-valued random variables $(X_t^x)_{t \in \R_+}$ that forms a Markov process with càdlàg paths, starting at $x$, and governed by the semigroup $(P_t)_{t \in \R_+}$. Now ask the following sequence of questions: 
\begin{itemize}
\item[(a)] Given an integer $n \geq 2$, and $n$ real numbers $x_1,\ldots,x_n$, is it possible to define a collection of random variables $(X_t^{x_1},\ldots,X_t^{x_n})_{t \in \R_+}$ such that, for every $1 \leq i \leq n$, $(X_t^{x_i})_{t \in \R_+}$ forms a Markov process with càdlàg paths, starting at $x_i$, and governed by the semigroup $(P_t)_{t \in \R_+}$, which is {\bf order-preserving},
i.e. such that whenever $x_i \leq x_j$, one also has that $X_t^{x_i} \leq X_t^{x_j}$ for all $t$ ?

\item[(b)] In addition, can one require $(X_t^{x_1},\ldots,X_t^{x_n})_{t \in \R_+}$ to form a Markov process on $\R^n$ governed by a Feller semigroup ? 

\item[(c)] Furthermore, 
  can these joint evolutions be defined in a consistent way for the various values of $n$, i.e. in such a way that, for $k \leq n$, and $1 \leq i_1,\ldots,i_k \leq n$, the projection onto the $k$ coordinates $i_1,\ldots,i_k$ of the $n$ paths $(X_t^{x_1},\ldots,X_t^{x_n})_{t \in \R_+}$ has the same probability distribution (as a stochastic process in $\R^k$) as the $k$ paths $(X_t^{y_1},\ldots,X_t^{y_k})_{t \in \R_+}$ starting from $y_1 = x_{i_1},\ldots,y_k = x_{i_k}$ (see Section \ref{ss:consistent} for a formal definition) ?

    
\end{itemize}

It is easily seen that a necessary condition for a positive answer to (a) is that the semigroup $(P_t)_{t \in \R_+}$ is stochastically monotone (see Section \ref{sec:stochastic-orders} for the definition). Our first result (Theorem \ref{th:extension}) is that stochastic monotonicity is also a sufficient condition for a positive answer to (a)-(b)-(c). 
Our second result (Theorem \ref{th:caract-sm}) shows that the construction used in the proof of Theorem \ref{th:extension} leads to a uniquely characterized object, that we call the comonotone flow associated with the Feller semigroup $(P_t)_{t \in \R_+}$. The characterization involves an optimality property with respect to the supermodular stochastic order (see Section \ref{sec:stochastic-orders} for the definition). 

In the rest of this introduction, we start by recalling basic definitions about Markov and Feller semigroups (Sections \ref{ss:Markov-kernels} and \ref{ss:Feller-semigroup}), the property of consistency (Section \ref{ss:consistent}), and stochastic orders (Section \ref{sec:stochastic-orders}). We then state our main results in Section \ref{ss:statement}, and discuss motivations, related results, and possible extensions in Section \ref{ss:discuss}.

\subsection{Markov kernels}\label{ss:Markov-kernels}

A Markov kernel $K$ on a measurable space $(S,\mathscr{S})$ is a map from $S \times \mathscr{S}$ to $\R$ such that 
\begin{itemize}
\item[(i)] for all $x \in S$, $K(x,\cdot)$ is a probability measure on $(S,\mathscr{S})$, 
\item[(ii)] for all $B \in \mathscr{S}$, $K(\cdot,B)$ is a measurable real-valued function on $S$.
\end{itemize}
 When there is no ambiguity regarding the choice of the $\sigma$-algebra $\mathscr{S}$, we simply say that $K$ is a Markov kernel on $S$. 
Given a bounded measurable real-valued function $f$ on $S$, we define a bounded measurable real-valued function $Kf$ on $S$ by $Kf(x) = \int_S f(y) dK(x,y)$. Denoting by $\BB_b(S)$ the vector space of bounded real-valued measurable functions on $S$, equipped with the sup-norm $\n{f}{\infty} = \sup\limits_{x \in S} |f(x)|$, the map $f \mapsto K f$ defines a linear operator from $\BB_b(S)$ into itself, and satisfies \begin{itemize}
\item[($\alpha$)] $\n{Kf}{\infty} \leq \n{f}{\infty}$,   
\item[($\beta$)] $K f \geq 0$ when $f \geq 0$,
\item[($\gamma$)] $K \un = \un$ (where $\un$ 
denotes the constant function equal to $1$).
\end{itemize}

\subsection{Markov and Feller semigroups}\label{ss:Feller-semigroup}

Given two Markov kernels $K,L$ on $S$, the composition of the two kernels is yet another Markov kernel $KL$ defined by $(KL)(x,B) = \int_{S} L(y,B) dK(x,y)$. The composition is an associative (but in general non-commutative) operation on Markov kernels. Moreover, for $f \in \BB_b(S)$, we have that $(KL)f = K (Lf)$. We say that a family of Markov kernels $(K_t)_{t \in \R_+}$ on $S$ is a {\it Markov semigroup} if 
\begin{itemize}
\item[(I)] for all $x \in S$ and $B \in \mathscr{S}$, $K_0(x,B)=\delta_x(B)$, 
\item[(II)] the Chapman-Kolmogorov equation holds: for all $s,t \geq 0$,  $K_{s+t} =  K_s K_t$.
\end{itemize}
Moreover, we say that a family $(X_t)_{t \in \R_+}$ of $S$-valued random variables defined on the same probability space 
is a Markov process governed by the Markov semigroup $(K_t)_{t \in \R_+}$ when, for all $s,t \in \R_+$, and all $B \in \mathscr{S}$, one has
$$\E \left(\un_B(X_{s+t})| \sigma(X_u: \ u \in [0,s]) \right) \overset{\mbox{\scriptsize a.s.}}{=} K_t(X_s,B).$$

Now assume that $S$ is a locally compact separable metric space (abbreviated {\it lcsm} in the sequel), equipped with the corresponding Borel $\sigma$-algebra. We denote by $\CC_0(S)$ the vector space of continuous real-valued functions on $S$ vanishing at infinity. Note that $\CC_0(S)$ equipped with the sup-norm is a Banach space.

We say that a Markov semigroup $(K_t)_{t \in \R_+}$ on $S$ enjoys the {\it Feller property} (or, more succintly, that it is a {\it Feller semigroup}) when: 
\begin{itemize}
\item[(Fa)] $\forall t \geq 0, \ \forall f\in \CC_0(S), \ K_tf \in \CC_0(S)$,
\item[(Fb)] $\forall f\in \CC_0(S), \ \lim_{t \to 0+} \n{K_t f - f }{\infty} = 0$.  
\end{itemize}
Let us now denote by $\CC_b(S)$ the vector space of bounded continuous real-valued functions on $S$, and observe that property (Fa) implies\footnote{See Proposition \ref{p:continue} below.}, but is not in general equivalent\footnote{See e.g. \cite{Bez+22}.} to
\begin{itemize}
\item[(Fa')] $\forall t \geq 0, \ \forall f\in \CC_b(S), \ K_tf \in \CC_b(S)$.
\end{itemize}
 Moreover, (Fb) may be replaced by the apparently weaker assumption of pointwise (instead of uniform) convergence
\begin{itemize}
\item[(Fb')] $\forall f\in \CC_0(S), \forall x \in S, \ \lim_{t \to 0+} K_t f(x) = f(x)$, 
\end{itemize}
but it turns out\footnote{See e.g. \cite{Kal02} Theorem 19.6, or \cite{RevYor99} Chapter III, Proposition (2.4).} that a Markov semigroup satisfying (Fa) and (Fb') also satisfies (Fb). 

\subsection{Consistent families}\label{ss:consistent}

For every integer $n \geq 2$, equip the product space $S^n$ with the product $\sigma$-algebra $\mathscr{S}^{\otimes n}$. Given integers $1 \leq k \leq n$, and $i_1,\ldots, i_k \in \ent{1}{n}$, we denote by $\pi^n_{i_1,\ldots,i_k}$ the projection from $S^n$ to $S^k$ defined by $\pi^n_{i_1,\ldots,i_k}(x_1,\ldots, x_n)=(x_{i_1},\ldots, x_{i_k})$. 
We say that a family of Markov kernels $(K^{(n)})_{n \geq 1}$, where, for each integer $n$, $K^{(n)}$ is a Markov kernel on $S^n$, is a {\it consistent family} if, for all $1 \leq k \leq n$,  all $i_1,\ldots, i_k \in \ent{1}{n}$, all $\mathbf{x} \in S^n$ and all measurable subset $B$ of $S^k$, 
\begin{equation}\label{e:consistent}K^{(n)}\left(\mathbf{x},(\pi^n_{i_1,\ldots, i_k})^{-1}(B)\right) = K^{(k)}\left(\pi^n_{i_1,\ldots, i_k}(\mathbf{x}), B\right).\end{equation}
We say that a family of Markov kernels $(K^{(n)})_{n \geq 2}$ is a {\it consistent extension} of a Markov kernel $K$ on $S$, if the family $(K^{(n)})_{n \geq 1}$, with $K^{(1)}=K$, is a consistent family. A family of Markov semigroups $\left((K^{(n)}_t)_{t \in \R_+} \right)_{n \geq 1}$ is said to be consistent if, for every $t \in \R_+$, $(K^{(n)}_t)_{n \geq 1}$ is a consistent family of Markov kernels; a consistent extension of a Markov semigroup on $S$ is defined accordingly.

Note that equal indices are allowed in \eqref{e:consistent}, which implies in particular that, for all $x \in S$, we have \begin{equation}\label{e:egalite-paires}K^{(2)}\left((x,x),\{ (y,y):  y \in S \}\right)=1.\end{equation}

\subsection{Stochastic orders}\label{sec:stochastic-orders}

Now assume that $S = \R$, and denote by $\BB_b^{\nearrow}(\R)$ the set of bounded non-decreasing real-valued functions on $\R$. The usual stochastic order $\mu \st \nu$ between (Borel) probability measures on $\R$ (see \cite{MulSto02}) is defined by the fact that, for all $f \in \BB_b^{\nearrow}(\R)$, one has $\int_{\R} f(x) d \mu(x) \leq \int_{\R} f(x) d \nu(x)$. We say that a Markov semigroup $(K_t)_{t \in \R_+}$ on $\R$ is {\it stochastically monotone} when: 
 \begin{itemize}
\item[(M)] $\forall t \geq 0, \ \forall f\in\BB_b^{\nearrow}(\R) , \ K_tf \in \BB_b^{\nearrow}(\R)$.
\end{itemize}
An immediately equivalent formulation of (M) in terms of $\st$ is that 
\begin{equation}\label{e:stoch-monot}\forall x,y \in \R, \  x \leq y \Rightarrow K_t(x,\cdot) \st K_t(y,\cdot).\end{equation} 

Given $\mathbf{x}=(x_1,\ldots, x_n) \in \R^n$ and $\mathbf{y}=(y_1,\ldots, y_n) \in \R^n$, we say that $\mathbf{y}$ is order-compatible with $\mathbf{x}$ if, for all $1 \leq i,j \leq n$, $x_i \leq x_j  \Rightarrow y_i \leq y_j$. We then denote $$\R^n_{\mathbf{x}} = \{ \mathbf{y} \in \R^n \mbox{ such that $\mathbf{y}$ is order-compatible with $\mathbf{x}$} \},$$ and we say that a Markov kernel $K^{(n)}$ on $\R^n$ is {\it order-preserving} when, for all $\mathbf{x} \in \R^n$, we have that $K^{(n)}\left(\mathbf{x}, \R^n_{\mathbf{x}} \right) = 1$.

Denote by $\SM_b(\R^n)$ the set of real-valued bounded Borel super-modular functions on $\R^n$, i.e. bounded Borel functions $f: \R^n \to \R$ such that, for all $\mathbf{x},\mathbf{y} \in \R^n$, $f(\mathbf{x})+ f(\mathbf{y}) \leq  f(\mathbf{x} \vee \mathbf{y}) + f(\mathbf{x} \wedge \mathbf{y})$, where, for $\mathbf{x}=(x_1,\ldots,x_n)$ and  $\mathbf{y}=(y_1,\ldots,y_n)$, 
we set $\mathbf{x} \vee \mathbf{y} = (\max(x_1,y_1),\ldots,\max(x_n,y_n))$ and $\mathbf{x} \wedge \mathbf{y} = (\min(x_1,y_1),\ldots,\min(x_n,y_n))$.

Given $n \geq 2$ and two (Borel) probability measures $\mu,\nu$ on $\R^n$, we say that $\mu \sm \nu$ (see e.g. \cite{MulSto02}) when
\begin{equation}\label{e:supermodular} \forall f \in \SM_b(\R^n) ,    \  \int_{\R^n} f(\mathbf{x}) d\mu(\mathbf{x}) \leq  \int_{\R^n} f(\mathbf{x}) d\nu(\mathbf{x}).\end{equation}
Note that $\sm$ defines a partial order on the set of probability measures on $\R^n$, and that two probability measures that are comparable with respect to $\sm$ must have the same one-dimensional marginal distributions.

\subsection{Statement of the main results}\label{ss:statement}

\begin{theorem}\label{th:extension}
If $(P_t)_{t \in \R_+}$ is a stochastically monotone Feller semigroup on $\R$, there exists a consistent extension of $(P_t)_{t\in\R_+}$ by a family $\left((P^{(n)}_t)_{t \in \R_+} \right)_{n \geq 1}$
 of order-preserving Feller semigroups.
\end{theorem}
For $n \geq 2$, denote by $\mathfrak{M}_{n}$ the set of Feller semigroups $(M_t^{(n)})_{t \in \R_+}$ on $\R^n$ such that, for every $t \geq 0$, every $\mathbf{x}=(x_1,\ldots, x_n) \in \R^n$, and every $1 \leq i \leq n$, $M_t^n(\mathbf{x},(\pi^n_i)^{-1}(B)) = P_t(x_i,B)$.

\begin{theorem}\label{th:caract-sm}
Let $(P_t)_{t \in \R_+}$ be a stochastically monotone Feller semigroup on $\R$, and, for $n \geq 2$, denote by $(\breve{P}_t^{(n)})_{t \in \R_+}$ the\footnote{A priori, this may not be unique, but it turns out that it is in view of the present statement.} Markov semigroup constructed in the proof of Theorem \ref{th:extension}. Then, for every  $(M_t^{(n)})_{t \in \R_+} \in \mathfrak{M}_{n}$, $s \in \R_+$ and $\mathbf{x} \in \R^n$,
one has that $M_s^{(n)}(\mathbf{x},\cdot) \sm \breve{P}^{(n)}_s(\mathbf{x},\cdot)$. Since by construction $(\breve{P}^{(n)}_t)_{t \in \R_+} \in \mathfrak{M}_{n}$, and since $\sm$ is a partial order, this shows that $(\breve{P}_t^{(n)})_{t \in \R_+}$ is uniquely characterized as the solution of the following maximization problem with respect to $\sm$: 
$$\breve{P}^{(n)}_s(\mathbf{x},\cdot)  = \max_{(M_t^{(n)})_{t \in \R_+} \in \mathfrak{M}_{n}} M_s^{(n)}(\mathbf{x},\cdot).$$
\end{theorem}

\subsection{Discussion}\label{ss:discuss}

Broadly speaking, Theorem \ref{th:extension} is a monotonicity equivalence result in the sense of \cite{FilMac01}: starting from a monotonicity property within a family of probability measures, one deduces the existence of an order-preserving coupling, i.e. a family of random variables providing an effective realization of the monotonicity property. The archetype for such results is Strassen's theorem\footnote{Strassen's theorem is valid in the general context of probability measures on Polish spaces. See \cite{Str65} for Strassen's original paper, and \cite{Lin99} for useful additional elements.}: if $\mu \st \nu$, there exists a pair of random variables $(X,Y)$ such that $\mbox{Law}(X)=\mu$, $\mbox{Law}(Y)=\nu$, and almost surely $X \leq Y$. In our context, the existence of the Feller semigroup $(P_t^{(n)})_{t \in \R_+}$ shows that\footnote{From classical results (see e.g. \cite{RevYor99} or \cite{Kal02}), we have the existence of an $\R^n$-valued Markov process  $(X_t^{x_1},\ldots, X_t^{x_n})_{t \in \R_+}$ with càdlàg paths, governed by  $(P_t^{(n)})_{t \in \R_+}$ and starting at $\mathbf{x}$. Then, by the order-preserving property of $P_t^{(n)}$, almost surely, for all $t \in \Q_+$,  $(X_t^{x_1},\ldots, X_t^{x_n}) \in  \R^n_{\mathbf{x}}$. Then, using right-continuity of paths and the fact that $ \R^n_{\mathbf{x}}$ is a closed subet of $\R^n$, we have that, almost surely, for all $t \in \R_+$, $(X_t^{x_1},\ldots, X_t^{x_n}) \in  \R^n_{\mathbf{x}}$.}, given $\mathbf{x}=(x_1,\ldots, x_n) \in \R^n$, one can define a family of real-valued random variables $(X_t^{x_1},\ldots, X_t^{x_n})_{t \in \R_+}$ on the same probability space, such that:
\begin{itemize}
\item for $1 \leq i \leq n$, $X_t^{x_i}$ is a real-valued Markov process with càdlàg paths, starting at $x_i$, and governed by $(P_t)_{t \in \R_+}$, 
\item for all $t \in \R_+$, $(X_t^{x_1},\ldots, X_t^{x_n})$ is order-compatible with $(x_1,\ldots, x_n)$.
\end{itemize}
The fact that $(X_t^{x_1},\ldots, X_t^{x_n})_{t \in \R_+}$ is itself a Markov process on $\R^n$ governed by a Feller semigroup comes as an important additional property when such order-preserving constructions are to be used in the context of interacting particle systems. For an example, we refer to \cite{BerFre23a}, where Theorem \ref{th:extension} plays a key role in the definition of the coupling that is used to control the convergence of a branching-selection particle system to its hydrodynamic limit; this was indeed our original motivation for investigating the question leading to the present work.

Existence results for order-preserving couplings of the same kind as Theorem \ref{th:extension} have been established in a variety of contexts, including\footnote{For the sake of brevity, we limit ourselves to a small sample of references, and point the interested reader to the works quoted by, or quoting, these references for a more exhaustive view of this subject.} discrete-time Markov chains on partially ordered Polish spaces (\cite{Kam+77}), continuous-time Markov chains on countable partially ordered spaces (\cite{Lop+00}), jump processes on partially ordered Polish spaces (\cite{Zha00}), interacting particle systems on $\{0,1\}^{\Z^d}$ (\cite{Lig05}). 

In the setting of continuous-time Markov processes on the real line, let us mention two important special cases for which the conclusion of Theorem \ref{th:extension} can be established relatively easily:  
\begin{itemize}
\item Lévy processes (see e.g. \cite{Ber96}): using the \textit{parallel coupling}, obtained by setting, for all $x \in \R$ and $t \geq 0$, $X_t^x = x + X_t^0$, where $(X_t^0)_{t\in\R_+}$ is a version of the Lévy process starting at $0$, see Figure \ref{fig:parallel-coupling};
\item Feller processes with continuous paths: one defines $P_t^{(n)}$ as the distribution at time $t$ of a family\footnote{A construction for general Feller processes (not necessarily with continous paths) leading to a strong Markov process is given in \cite{Eva+13} Section 2.1. Continuity of paths easily shows that $P_t^{(n)}$ is order-preserving. The Feller property of $(P_t^{(n)})_{t\in\R_+}$ can then be established using the fact that we have a consistent extension of a Feller semigroup on the real line by order-preserving semigroups, exactly as in the Proof of Theorem \ref{th:extension}.} of $n$ trajectories that evolve independently until they meet, and stick together thereafter (this is often called the \textit{Doeblin coupling}), see 
Figure \ref{fig:doeblin-coupling}; the construction with $n=2$ is a nice way of proving the not-so-obvious fact that any Feller process on the real line with continuous paths is stochastically monotone.
\end{itemize}
 \begin{figure}
    \centering
       \begin{subfigure}{.49\textwidth}
        \centering
        \includegraphics[width=\textwidth]{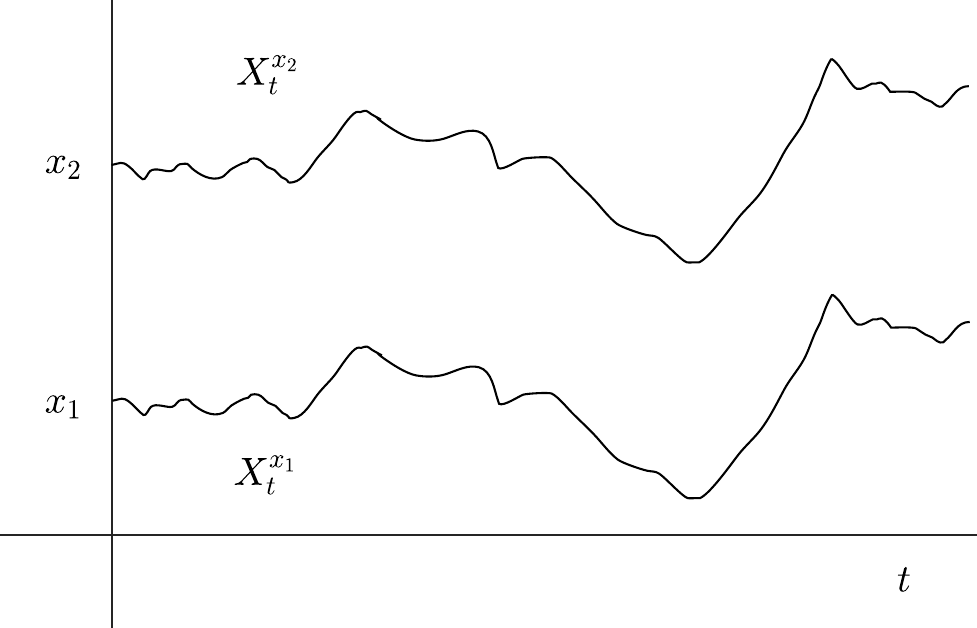}
        \caption{Parallel coupling}
        \label{fig:parallel-coupling}
    \end{subfigure}
    \begin{subfigure}{.49\textwidth}
        \centering
        \includegraphics[width=\textwidth]{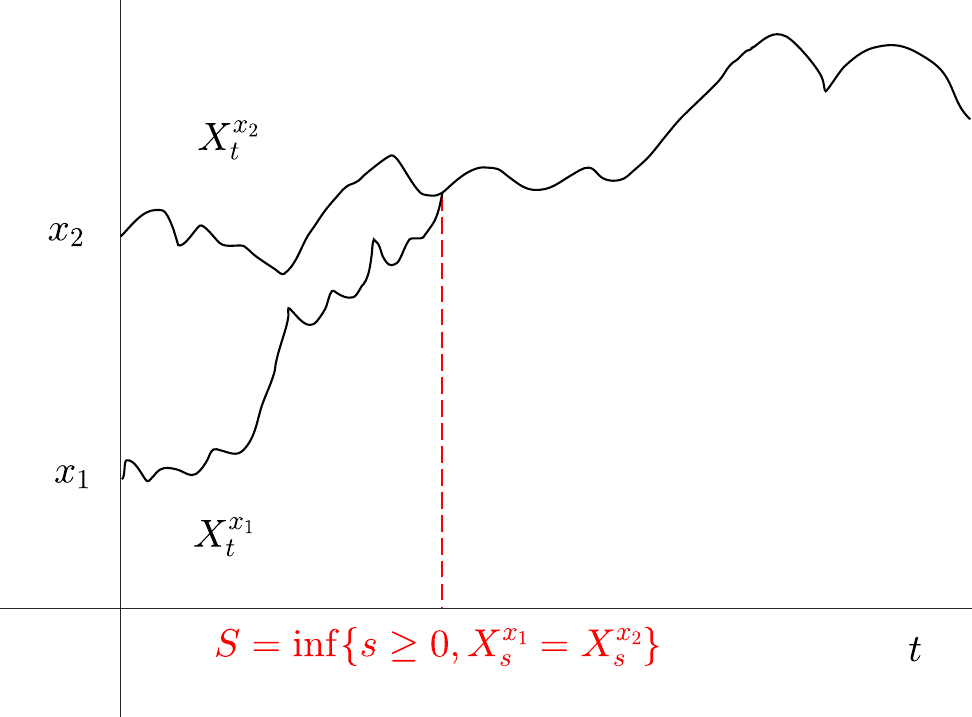}
        \caption{Doeblin coupling}
        \label{fig:doeblin-coupling}
    \end{subfigure}
    \caption{Special cases of Theorem \ref{th:extension} with easier constructions}
    \label{fig:monotone-coupling}
\end{figure}
In the case of jump processes (\cite{Che04}), and more generally, of Lévy-type processes (\cite{Wan13, Kol11}), necessary and sufficient explicit conditions for stochastic monotonicity, expressed in terms of the infinitesimal generator of the process, have been obtained. One nice feature of Theorem \ref{th:extension} is that it holds in a completely general setting, regardless of the specific structure of the underlying process, as soon as it is stochastically monotone and enjoys the Feller property, which are also necessary conditions for the conclusion of the theorem to hold. 

Theorem \ref{th:caract-sm} shows that, for all $n \geq 2$, $s \in \R_+$ and $\mathbf{x}=(x_1,\ldots,x_n)$, $\breve{P}^{(n)}_s(\mathbf{x},\cdot)$ produces an $n$-dimensional coupling of the distributions $P_s(x_1,\cdot),\ldots,P_s(x_n,\cdot)$, enjoying a maximality property with respect to the super-modular order. Without the constraint that this coupling has to be the distribution at time $s$ of a Feller process on $\R^n$ whose one-dimensional marginals evolve according to $(P_t)_{t \in \R_+}$, the maximum would be achieved by the classical comonotone coupling of $P_s(x_1,\cdot),\ldots,P_s(x_n,\cdot)$ (see \cite{MulSto02}). Loosely speaking, $\breve{P}^{(n)}$ is obtained (as a well-defined limit) by composing infinitely many such comonotone couplings associated with an infinitesimal time interval $s$, so we choose to call the resulting object the {\it comonotone flow} associated with $(P_t)_{t\in\R_+}$. Strictly speaking, we have not defined a flow but only a consistent family $\left((P_t^{(n)})_{t \in \R_+}\right)_{n \geq 1}$ of Feller Markov semigroups. However, the general theory developed in\footnote{See also the correction \cite{LeJRai20} and the fully corrected version arXiv:math/0203221v6. Condition \eqref{e:consistent} and its consequence \eqref{e:egalite-paires} precisely match the definition in \cite{LeJRai04} of a compatible family of Feller semigroups satisfying $P_t^{(2)}f^{\otimes 2}(x,x) = P_t f^2(x)$ for all $x \in S$ and $f \in \CC_0(S)$.} \cite{LeJRai04} shows that this family of semigroups corresponds to a stochastic flow, in a precise sense, and our terminology reflects this property. We believe the comonotone flow thus obtained to be an interesting extension of stochastic flows usually encountered in the context of stochastic differential equations: more specifically, when the underlying process $(X_t)_{t\in\R_+}$ is the solution of a stochastic differential equation (with suitable regularity assumptions on the coefficients), we conjecture that the comonotone flow coincides with the stochastic flow of strong solutions of the SDE obtained with a single Brownian motion (see Kunita \cite{Kun97, Kun19}). We postpone the study of such properties to future work.

\subsection{Organization of the paper}

In Section \ref{s:Feller}, we collect various results related to the Feller property of Markov semigroups, on general (locally compact separable metric, or compact metric) spaces, then on $\R$. Section \ref{s:extension} is devoted to the proof of Theorem \ref{th:extension}, and Section \ref{s:caract-sm} to the proof of Theorem \ref{th:caract-sm}. Section \ref{s:appendix-KamKreOBr} is an appendix where we explain why we think an element is missing in the proof of some results in \cite{Kam+77} which are related to the conclusion of Theorem \ref{th:extension} in the present paper. Section \ref{s:appendix-simulations} shows some numerical illustrations.

\section{Feller-related results}\label{s:Feller}

This section is devoted to results related to the Feller property of Markov semigroups. We could not find a reference containing an exposition of these properties that completely suited our needs, and so decided to give a short self-contained exposition instead. In Section \ref{ss:Feller-general}, we provide alternative formulations of the Feller property that turn out to be more suitable for the arguments developed in Section \ref{s:extension}, namely, transferring the assumed regularity properties of the family of kernels $(P_t)_{t \in \R_+}$ on $\R$, to families of kernels on $\R^n$ that extend $P_t$.  In Section \ref{ss:R-bar}, we show that the combination of stochastic monotonicity with the Feller property allows one to extend $(P_t)_{t \in \R_+}$ to a Feller semigroup $(\Tilde{P}_t)_{t \in \R_+}$ on the extended real line $\overline{\R}$, whose advantage (in our context) over the usual one-point compactification $\R \cup \{ \infty \}$ is its compatibility with the underlying order on $\R$.

\subsection{General properties}\label{ss:Feller-general}

Given a lcsm space, we denote by $\mathcal{P}(S)$ the set of (Borel) probability measures on $S$, equipped with the topology of weak convergence, which is metrizable (see below). To a Markov kernel $K$ on $S$, we associate the map $\check{K}$ from $S$ to $\mathcal{P}(S)$, defined as $x \mapsto K(x,\cdot)$. Conversely, to a map $\mathbf{K}$ from $S$ to $\mathcal{P}(S)$, we associate the map 
$\hat{\mathbf{K}}$ from $S \times \mathscr{S}$ to $\R$ defined as $(x,B) \mapsto \left[ \mathbf{K}(x)\right](B)$, which may or may not be a Markov kernel, depending on whether the map $\hat{\mathbf{K}}(\cdot,B)$ is measurable for every $B$.

\begin{lemma}\label{l:noyau-cont}
Given a lcsm space $S$, and a continuous map $\mathbf{K} \ : \ S \to \mathcal{P}(S)$, $\hat{\mathbf{K}}$ is a Markov kernel.
\end{lemma}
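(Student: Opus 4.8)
The plan is to show that $\hat{\mathbf{K}}(\cdot, B)$ is Borel measurable for every Borel set $B \subseteq S$, since conditions (i) and (ii) of the definition of a Markov kernel are then immediate: (i) holds because $\mathbf{K}(x)$ is by hypothesis a probability measure, and (ii) is precisely the measurability statement we must prove. So the entire content is measurability, and the natural tool is a Dynkin-type (monotone class / $\pi$--$\lambda$) argument, seeded by open sets.

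First I would recall that the topology of weak convergence on $\mathcal{P}(S)$, for $S$ lcsm, is metrizable (e.g. by the Prokhorov or the bounded-Lipschitz metric), and that by the Portmanteau theorem the map $\mu \mapsto \mu(U)$ is lower semicontinuous on $\mathcal{P}(S)$ for every open $U \subseteq S$, while $\mu \mapsto \mu(F)$ is upper semicontinuous for every closed $F$. Composing with the continuous map $\mathbf{K}$, it follows that $x \mapsto \hat{\mathbf{K}}(x,U) = [\mathbf{K}(x)](U)$ is lower semicontinuous, hence Borel, for every open $U$; similarly $x \mapsto \hat{\mathbf{K}}(x,F)$ is Borel for every closed $F$. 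This disposes of the generating class.

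Next I would run the standard $\lambda$--system argument. Let $\mathscr{D}$ be the collection of Borel sets $B \subseteq S$ for which $\hat{\mathbf{K}}(\cdot,B)$ is Borel measurable. The class of open sets is a $\pi$-system generating the Borel $\sigma$-algebra, and it is contained in $\mathscr{D}$ by the previous step. One checks that $\mathscr{D}$ is a $\lambda$-system: it contains $S$ (since $\hat{\mathbf{K}}(\cdot,S) \equiv 1$ by property (iii)/$(\gamma)$ for probability measures); it is stable under proper differences, because $\hat{\mathbf{K}}(x, B \setminus A) = \hat{\mathbf{K}}(x,B) - \hat{\mathbf{K}}(x,A)$ when $A \subseteq B$, a difference of Borel functions; and it is stable under increasing countable unions, because if $B_k \uparrow B$ then $\hat{\mathbf{K}}(x,B_k) \uparrow \hat{\mathbf{K}}(x,B)$ pointwise by monotone continuity of the probability measure $\mathbf{K}(x)$, and a pointwise limit of Borel functions is Borel. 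By Dynkin's $\pi$--$\lambda$ theorem, $\mathscr{D}$ contains the Borel $\sigma$-algebra, which is exactly the claim.

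I do not expect any serious obstacle here; the only point that deserves a word of care is the passage from "continuous into $\mathcal{P}(S)$ with the weak topology" to "$x \mapsto \mu_x(U)$ lower semicontinuous," which rests on the Portmanteau characterization and on metrizability of the weak topology for lcsm $S$ (so that lower semicontinuity can be checked sequentially if one prefers). Everything else is the textbook monotone-class routine; if anything, one might alternatively avoid the $\lambda$-system step entirely by noting that the lower semicontinuous functions $\hat{\mathbf{K}}(\cdot,U)$ already determine measurability of $x \mapsto \int g\,d\mathbf{K}(x)$ for bounded continuous $g$ (approximating by finite linear combinations of indicators of open sets via simple functions), and then invoking a functional monotone class theorem, but the direct $\pi$--$\lambda$ argument on sets is the cleanest.
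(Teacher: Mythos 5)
Your proof is correct, and it takes a somewhat different route from the paper's. The paper seeds a \emph{functional} monotone class argument (Theorem 0.2.2 in \cite{RevYor}) with the observation that $x \mapsto \hat{\mathbf{K}}f(x)$ is continuous, hence Borel, for every $f \in \CC_b(S)$ --- this is just the definition of weak convergence composed with the continuity of $\mathbf{K}$ --- and then concludes that $x \mapsto \hat{\mathbf{K}}f(x)$ is Borel for all bounded Borel $f$, in particular for indicators. You instead seed the \emph{set-level} Dynkin $\pi$--$\lambda$ theorem with the open sets, using the Portmanteau theorem to get lower semicontinuity of $\mu \mapsto \mu(U)$ on $\mathcal{P}(S)$ (together with metrizability, so that sequential lower semicontinuity suffices), and then verify directly that the good sets form a $\lambda$-system. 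Both arguments are standard and complete; yours requires the Portmanteau characterization as an extra input but avoids the functional form of the monotone class theorem, while the paper's version reuses a continuity fact ($\check{K}$ continuous iff $Kf \in \CC_b(S)$ for all $f \in \CC_b(S)$) that it needs anyway in Proposition \ref{p:continue}. You correctly note at the end that the two routes are interchangeable here.
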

\begin{proof}
Arguing as in the proof of Proposition \ref{p:continue} below (there is no circular reference, we just wanted to avoid writing the same proof twice), we have that, for all $f \in \CC_b(S)$, the map $x \mapsto \hat{\mathbf{K}}f(x) = \int_S f(y) d\left[\mathbf{K}(x)\right](y)$ is itself an element of $\CC_b(S)$, and in particular is a Borel map.

We now apply Theorem 0.2.2 in \cite{RevYor99}, which is a functional version of the monotone class theorem: the set $\mathscr{H}$ of functions $f \in \BB_b(S)$ such that $x \mapsto  \hat{\mathbf{K}}f(x)$ is Borel contains $\CC_b(S)$ -- which plays the role of $\mathscr{C}$ in \cite{RevYor99} -- and the assumptions of the theorem are met\footnote{The set $\mathscr{H}$ is a vector space that contains the constant functions and the supremum of any bounded non-decreasing sequence of its non-negative elements (thanks to the monotone convergence theorem); the set $\mathscr{C}$ is stable under pointwise multiplication.}, so that $\mathscr{H}$ contains all $\s(\CC_b(S))$-measurable functions, and thus all bounded real-valued Borel functions on $S$. In particular $ \hat{\mathbf{K}} \un_B$ is a Borel function for any Borel set $B$ in $S$, so $\hat{\mathbf{K}}$ is indeed a Markov kernel on $S$.
\end{proof}

\begin{proposition}\label{p:continue}
Given a Markov kernel $K$ on a lcsm space $S$, the following three properties are equivalent: 
\item[$\mathbf{(i)}$]  $\check{K}$ is a continuous map,
\item[$\mathbf{(ii)}$ ] $\forall f\in \CC_b(S), \ K f \in \CC_b(S)$,
\item[$\mathbf{(iii)}$] $\forall f\in \CC_0(S), \ K f \in \CC_b(S)$.
\end{proposition}
\begin{proof}
Remember that, for a Markov kernel $K$ and $f \in \BB_b(S)$, one always has $Kf \in  \BB_b(S)$, so that the only stake in $\mathbf{(ii)}$ and $\mathbf{(iii)}$ is the continuity of $Kf$. Now, by definition, continuity of the map $x \mapsto K(x,\cdot)$ means that, for any sequence $(x_k)_{k \geq 1}$ such that $\lim_{k \to +\infty} x_k = x$, 
we have the weak convergence $K(x_k, \cdot) \xrightarrow[k \to +\infty]{\mathbf{w}} K(x,\cdot)$. In turn, weak convergence means that, for every $f \in \CC_b(S)$, 
$\lim_{k \to +\infty} Kf(x_k) = Kf(x)$, which reads as the continuity of $Pf$ at $x$. As a consequence,  $\mathbf{(i)}$ and $\mathbf{(ii)}$ are equivalent. The equivalence of $\mathbf{(i)}$ and $\mathbf{(iii)}$ stems from the fact that, since $S$ is a lcsm space, the weak convergence  $K(x_k, \cdot) \xrightarrow[k \to +\infty]{\mathbf{w}} K(x,\cdot)$ is equivalent to the fact that $\lim_{k \to +\infty} K f(x_k) = K f(x)$ for every $f \in \CC_0(S)$.
\end{proof}

\begin{proposition}\label{p:continue-0}
A Markov kernel $K$ on a lcsm space $S$ satisfies 
\begin{equation}\label{e:C0C0} \forall f\in \CC_0(S), \ K f \in \CC_0(S),\end{equation}
 if and only if the following two conditions are satisfied:
\begin{itemize}
\item[$\mathbf{(i)}$] $\check{K}$ is a continuous map,
\item[$\mathbf{(ii)}$] for every compact subset $C$ of $S$, $\lim_{x \to \infty} K(x,C)=0$. 
\end{itemize}
\end{proposition}

\begin{proof}
We start with the "if" part, assuming that $\mathbf{(i)}$ and $\mathbf{(ii)}$ are satisfied. Consider $f \in \CC_0(S) $. Since $\CC_0(S) \subset \CC_b(S)$, Proposition \ref{p:continue} and $\mathbf{(i)}$ show that $Kf \in \CC_b(S)$, and it remains to prove that $Kf$ goes to zero at infinity. Given $\varepsilon > 0$ and a compact $C$ such that $|f| \leq \varepsilon$ outside $C$, we have $|Kf(x)| \leq   \n{f}{\infty} K(x,C) + \varepsilon$, so that, thanks to $\mathbf{(ii)}$, $|Kf(x)| \leq 2\varepsilon$ as soon as $x$ is outside a sufficiently large compact set $C'$. 

Now for the "only if" part, assuming that \eqref{e:C0C0} is satisfied. From Proposition \ref{p:continue}, we have $\mathbf{(i)}$, since $\CC_0(S) \subset \CC_b(S)$. As for $\mathbf{(ii)}$, consider a compact set $C$.
By compactness of $C$ and local compactness of $S$, there exists\footnote{This is a rather standard result. Here is an elementary argument, using the notations of the proof of Proposition \ref{p:fortement-continu}.
Since $S$ is locally compact, for every $x \in C$, there exists a number $r > 0$ such that $\overline{B(x,r)}$ is compact. By compactness of $C$, we deduce that there exist $m \geq 1$, $x_1,\ldots,x_m \in K$ and $r_1,\ldots, r_m > 0$ such that $C \subset U$, where $U = \bigcup_{k=1}^m B(x_k,r_k)$. By compactness of $C$ again, the continuous function $x \mapsto d(x, U^c)$ has a minimum value $\rho$ on $C$, and, since $U^c$ is closed and $C \cap U^c = \emptyset$, we must have $\rho > 0$. Choosing $\varepsilon \in (0,\rho)$ ensures that $V_{\varepsilon}(C) \subset U$. As a consequence, $V_{\varepsilon}(C)$ is included in the compact set $\bigcup_{k=1}^m \overline{B(x_k,r_k)}$, so that $\overline{V_{\varepsilon}(C)}$ is compact. One can then take $f = I_C^{\varepsilon}$.} a continuous function $f$ with compact support such that $f \geq \un_C$. Then $K(x,C) = K \un_C (x) \leq Kf(x)$, and, since $f \in \CC_0(S)$, \eqref{e:C0C0} implies that $Kf \in \CC_0(S)$ so that $\lim_{x \to \infty} Kf(x) = 0$.
\end{proof}

We now focus on the case where $S$ is a compact (hence separable) metric space, denoting by $\CC(S)$ the space of continuous real-valued functions on $S$ (due to the fact that $S$ is compact, we have $\CC(S) = \CC_b(S) = \CC_0(S)$). Among the many possible metrics compatible with the weak convergence topology on $\mathcal{P}(S)$, we choose to use the Wasserstein (or Kantorovich-Rubinstein) $W_1$ distance (see \cite{Vil09}, Chapter 6). Specifically, consider the space $\Lip(S)$ of real-valued Lipschitz functions on $S$. Denoting by $d_S$ the metric on $S$, for $f \in \Lip(S)$, we let 
$$\n{f}{\sLip} = \sup \{ |f(y)-f(x)|/d_S(x,y) : \  x , y \in S , \ x \neq y \}.$$
Note that, since $S$ is compact, every $f \in \Lip(S)$ is bounded. 
The $W_1$ distance on $\mathcal{P}(S)$ is then given by
\begin{equation}\label{e:def-beta}W_1(\mu,\nu) = \sup \left\{ \left|  \int_S f(x) d\mu(x) -  \int_S f(x) d\nu(x) \right| :   \    \n{f}{\sLip} \leq 1  \right\}.\end{equation}

\begin{lemma}\label{l:contract-beta}
Let $K,L,M$ be Markov kernels on $S$, then $$\sup\limits_{x \in S} W_1(KL(x,\cdot), KM(x,\cdot)) \leq \sup\limits_{x \in S}  W_1(L(x,\cdot), M(x,\cdot)).$$ 
\end{lemma}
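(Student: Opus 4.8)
The plan is to prove the inequality by exploiting the variational formula \eqref{e:def-beta} for $W_1$, together with the two elementary properties of the operator $f \mapsto Kf$ recalled in Subsection \ref{ss:Markov-kernels} — namely that $K$ is a positive contraction on $\BB_b(S)$ fixing constants — and the fact that $Kf$ is Lipschitz whenever $f$ is, with the same (or smaller) Lipschitz constant. First I would fix $x \in S$ and a function $f \in \Lip(S)$ with $\n{f}{Lip} \leq 1$. The quantity to be controlled is
\[
\left| \int_S f \, d(KL)(x,\cdot) - \int_S f \, d(KM)(x,\cdot) \right| = \left| (KL)f(x) - (KM)f(x) \right| = \left| K(Lf)(x) - K(Mf)(x) \right| = \left| K(Lf - Mf)(x) \right|,
\]
using the associativity identity $(KL)f = K(Lf)$ from Subsection \ref{ss:Markov-kernels}.

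The key step is then to recognize that $g := Lf - Mf$ is, pointwise, controlled by the $W_1$-distance between the corresponding measures: for every $y \in S$,
\[
|g(y)| = |Lf(y) - Mf(y)| = \left| \int_S f \, dL(y,\cdot) - \int_S f \, dM(y,\cdot) \right| \leq W_1(L(y,\cdot), M(y,\cdot)) \leq \sup_{z \in S} W_1(L(z,\cdot), M(z,\cdot)) =: c,
\]
where the first inequality uses \eqref{e:def-beta} and $\n{f}{Lip} \leq 1$. Thus $|g| \leq c \un$ pointwise, i.e. $-c\un \leq g \leq c\un$. Applying the positivity and monotonicity of $K$ (properties ($\beta$) and ($\gamma$): $K$ preserves the order and $K\un = \un$) gives $-c\un \leq Kg \leq c\un$, hence $|Kg(x)| \leq c$. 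Since this holds for all $x \in S$ and all $1$-Lipschitz $f$, taking the supremum over such $f$ in \eqref{e:def-beta} yields $W_1(KL(x,\cdot), KM(x,\cdot)) \leq c$ for every $x$, and taking the supremum over $x$ gives the claim. (Alternatively, one can bound $|Kg(x)| \leq \n{g}{\infty} \leq c$ directly via property ($\alpha$), which is the same argument phrased through the contraction property.)

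There is essentially no obstacle here: the statement is a soft consequence of $K$ being a positive unital contraction together with the dual description of $W_1$. The only mild point requiring a word of care is the tacit use of the fact that $Lf, Mf \in \BB_b(S)$ so that $g \in \BB_b(S)$ and $Kg$ is well-defined — this is immediate since $f$ is bounded (as $S$ is compact) and Markov kernels map $\BB_b(S)$ into itself. One should also note that the chain of inequalities does not require $Lf$ or $Mf$ to be continuous, so the argument goes through for arbitrary Markov kernels $K, L, M$ on $S$ as stated, without invoking the Feller property; the Lipschitz regularity of $f$ is used only through \eqref{e:def-beta}, not to claim anything about $Lf$ or $Mf$.
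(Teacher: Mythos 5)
Your proof is correct and follows essentially the same route as the paper's: bound $|Lf(y)-Mf(y)|$ pointwise by $\sup_{z \in S} W_1(L(z,\cdot),M(z,\cdot))$ via the dual formula \eqref{e:def-beta}, then apply the contraction (equivalently, positivity plus unitality) of $K$ and take suprema over $f$ and $x$. The only blemish is the unused claim in your opening sentence that $Kf$ is Lipschitz whenever $f$ is --- this is false for a general Markov kernel --- but since your argument never relies on it (as you yourself note at the end), the proof stands.
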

\begin{proof}
 Consider $f \in \Lip(S)$ such that $\n{f}{\sLip} \leq 1$.
By definition, for all $x \in S$, we have that $ \left| L f(x) - M f(x) \right| \leq W_1 \left(L(x,\cdot), M (x,\cdot ) \right),$
so that $\n{Lf - Mf}{\infty} \leq  \sup\limits_{x \in S}  W_1(L(x,\cdot), M(x,\cdot))$. Using the contraction property ($\alpha$) of Markov kernels, $\n{KLf - KMf}{\infty} \leq \n{Lf - Mf}{\infty}$, so that
 $$ \sup\limits_{x \in S}  \sup\limits_{f \in \sLip(S), \ \n{f}{\sLip} \leq 1}\left | KLf(x) - KMf(x)  \right| \leq  \sup\limits_{x \in S}  W_1(L(x,\cdot), M(x,\cdot)).$$
 \end{proof}

\begin{proposition}\label{p:fortement-continu}
Given a family of Markov kernels $(K_t)_{t \in J}$ on a compact metric space $S$, where $J$ is a subset of $(0,+\infty)$ such that $\inf J = 0$, the following properties are equivalent: 
\begin{itemize}
\item[$\mathbf{(i)}$] $\forall f\in \CC(S), \ \lim_{t \to 0+} \n{K_t f - f }{\infty} = 0$
\item[$\mathbf{(ii)}$] for all $\varepsilon > 0$, and all $x \in S$, $K_t(x,B_S(x,\varepsilon)^c)$ goes\footnote{We denote by $B_S(x,r)$ the open ball of radius $r$ centered at $x$.} to $0$ as $t$ goes to $0$, uniformly over $x \in S$, 
\item[$\mathbf{(iii)}$] for all $x \in S$, $W_1(K_t(x,\cdot), \delta_x(\cdot))$ goes to $0$ as $t$ goes to $0$, uniformly over $x \in S$.
\end{itemize}
\end{proposition}

\begin{proof}
Before we start the proof, let us introduce a few notation and definitions. For a non-empty set $A \subset S$ and $x \in S$, we let $d_S(x,A) = \inf \{ d_S(x,y) : y \in A \}$, and, for $r > 0$, we denote $V_r(A) = \{ x \in S : \ d_S(x,A) < r\}$. We then define the function $I_A^r$ on $S$ by $I_A^r(x) = 1 - \frac{\min(d_S(x,A),r)}{r}$. We have that $0 \leq I_A^r \leq 1$, $I_A^r = 1 $ on $A$, $I_A^r = 0$ outside $V_r(A)$, and $\n{I_A^r}{\Lip} \leq 1/r$. 

We start with $\mathbf{(i)} \Rightarrow \mathbf{(ii)}$. Assume that $\mathbf{(i)}$  holds, and consider $\varepsilon > 0$, $x \in S$, and the function $f = I_{B_S(x,\varepsilon/3)}^{\varepsilon/3}$. For all $y \in B_S(x,\varepsilon/3)$ one has that $f(y)=1$, and, for all  $z \notin B_S(y, \varepsilon)$, one has that $z \notin V_{\varepsilon/3}(B_S(x,\varepsilon/3))$ so that $f(z)=0$. Since we also have $f \leq 1$, we deduce that $K_t(y,B_S(y,\varepsilon)^c) \leq 1 - K_tf(y)$.  From $\mathbf{(i)}$, $K_tf(y)$ goes to $f(y)=1$ as $t$ goes to $0$, uniformly with respect to $y \in B_S(x,\varepsilon/3)$, so that $K_t(y,B_S(y,\varepsilon)^c)$ goes to $0$ as $t$ goes to $0$, uniformly with respect to $y \in B_S(x,\varepsilon/3)$. Covering the compact set $S$ by a finite number of balls of the form $B_S(x,\varepsilon/3)$, we have proved  that $\mathbf{(ii)}$ holds. 
 
We now prove that $\mathbf{(ii)} \Rightarrow \mathbf{(i)}$. Assume that $\mathbf{(ii)}$ holds. Consider $f \in \CC(S)$ and $\varepsilon >\nolinebreak 0$. Since $S$ is compact, $f$ is uniformly continuous and there is a $\delta > 0$ such that, whenever $d_S(x,y) \leq \delta$, one has $|f(x)-f(y)| \leq \varepsilon$. As a consequence, for all $x \in S$, 
$$|K_tf(x) - f(x) | \leq  \varepsilon +  2 \n{f}{\infty} K_t(x,B_S(x,\delta)^c ),$$ so $\mathbf{(ii)}$ leads to the desired conclusion.

The proof that  $\mathbf{(ii)} \Rightarrow \mathbf{(iii)}$ is similar, where Lipschitz continuity replaces uniform continuity. Indeed, for $f \in \Lip(S)$, we have, for all $\varepsilon > 0$, and for all $x \in S$, that   
$$|K_tf(x) - f(x) | \leq \n{f}{\sLip} \varepsilon +   \Delta_S \n{f}{\sLip} K_t(x,B_S(x,\varepsilon)^c ),$$ 
using the fact that, for all $x,y \in S$, $|f(y)-f(x)| \leq  \n{f}{\sLip} \Delta_S$, where $\Delta_S = \sup\limits_{x,y \in S} d_S(x,y)$ is finite thanks to the fact that $S$ is compact.

We now prove that $\mathbf{(iii)} \Rightarrow \mathbf{(ii)}$. Given $x \in S$ and $\varepsilon > 0$, consider the function $f =  I_{B_S(x,\varepsilon/3)}^{\varepsilon/3}$ used in the proof that $\mathbf{(i)} \Rightarrow \mathbf{(ii)}$. We have that $\n{f}{\sLip} \leq 3/\varepsilon$, so that $g = (3/\varepsilon)^{-1}f$, sastifies $\n{g}{\sLip} \leq 1$, and as a consequence, for all $y \in S$, $|K_tf(y) - f(y)| \leq (3/\varepsilon) W_1(K_t(y,\cdot), \delta_y(\cdot))$, and we can argue as in the proof that $\mathbf{(i)} \Rightarrow \mathbf{(ii)}$, thanks to $\mathbf{(iii)}$.

\end{proof}

\subsection{Extension of $(P_t)$ to $\overline{\R}$}\label{ss:R-bar}

For $u,v \in \R$, denote by $\CC_{u,v}(\R)$ the set of continuous functions $f:\R \to \R$ such that we have $\lim_{x \to -\infty} f(x) = u$ and $\lim_{x \to +\infty} f(x) = v$.

\begin{proposition}\label{p:Feller-monot}
A Markov kernel $K$ on $\R$ satisfying \eqref{e:C0C0} and 
\begin{equation}\label{e:monot}\forall f\in\BB_b^{\nearrow}(\R) , \ K f \in \BB_b^{\nearrow}(\R), \end{equation}
also satisfies \begin{equation}\label{e:Rbar} \forall f \in \CC_{u,v}(\R) , \ K f \in \CC_{u,v}(\R).\end{equation}
\end{proposition}

\begin{proof}

We first prove that, for all $a > 0$, 
\begin{equation}\label{e:limite-loin} \lim_{x \to -\infty} K(x,[-a,+\infty)) = 0 \mbox{ and } \lim_{x \to +\infty} K(x,(-\infty, a]) = 0.\end{equation}
Given $a > 0$, consider $\varepsilon > 0$. Then choose $b \geq a$ such that $K(0,[-b,+b]^c) \leq \varepsilon$. Thanks to \eqref{e:C0C0} and Proposition \ref{p:continue-0}, there exists  $x_0 \leq 0$ such that, for all $x \leq x_0$, $K(x, [-b,+b]) \leq \varepsilon$. By the monotonicity assumption \eqref{e:monot},   $K(x, (b,+\infty)) \leq K(0, (b,+\infty))) \leq \varepsilon$. Combining the two inequalities for $K(x,\cdot)$, we deduce that $K(x,[-b,+\infty)) \leq 2 \varepsilon$, and, since $b \geq a$, we have that, for all $x \leq x_0$, $K(x, [-a,+\infty)) \leq K(x, [-b,+\infty)) \leq 2 \varepsilon$. We have proved the first part of \eqref{e:limite-loin}. The second part is proved symmetrically.

Now consider $f \in \CC_{u,v}(\R)$. Given $\epsilon > 0$, there exists $a > 0$ such that, for all $z < -a$, $|f(z)-u| \leq \varepsilon$. As a consequence, $|Kf(x) - u| \leq \varepsilon + (||f||_{\infty} + u) K(x,[-a,+\infty))$.
Using the first part of \eqref{e:limite-loin}, we deduce that $\limsup_{x \to -\infty} |Kf(x)-u| \leq \varepsilon$. Since $\varepsilon$ is arbitrary, we have thus proved that $\lim_{x \to -\infty} Kf(x) = u$. A similar argument proves that $\lim_{x \to +\infty} Kf(x) = v$.

Finally, since $f \in \CC_b(\R)$ and $K$ satisfies \eqref{e:C0C0}, Proposition \ref{p:continue-0} shows that $K f$ is a continuous function.

\end{proof}

We equip the extended real line $\overline{\R} = \R \cup \{-\infty,+\infty\} = [-\infty,+\infty]$ with the metric $d_1$ defined by $d_1(x,y) = |\phi(y) - \phi(x)|$, where $\phi(x) = \tanh(x)$ (with $\tanh(+\infty)=1$ and $\tanh(-\infty)=-1$), which makes it a separable compact metric space. For $n \geq 2$, we equip $\overline{\R}^n$ with the metric $d_n$ defined by $d_n(\mathbf{x},\mathbf{y}) = \sum_{i=1}^n |\phi(y_i) - \phi(x_i)|$.

Given a Borel probability measure $\mu$ on $\R$, we extend it to a probability measure $\Tilde\mu$ on $\overline{\R}$ by letting, for every  Borel set $B$ of $\overline{\R}$, $\Tilde{\mu}(B) = \mu(B \cap \R)$.

\begin{lemma}\label{l:cv-bar}
Given Borel probability measures $\mu,\mu_1,\mu_2,\ldots$ on $\R$, $\mu_k \xrightarrow[k \to +\infty]{\mathbf{w}} \mu$ if and only if  $\Tilde{\mu_k} \xrightarrow[k \to +\infty]{\mathbf{w}} \Tilde{\mu}$.
\end{lemma}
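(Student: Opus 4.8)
The plan is to express both notions of weak convergence through test functions and to exploit the defining property of the extension $\widetilde{\mu}$, namely that it charges neither $-\infty$ nor $+\infty$, so that $\int_{\overline{\R}} g\, d\widetilde{\mu} = \int_{\R} g|_{\R}\, d\mu$ for every bounded Borel function $g$ on $\overline{\R}$, and similarly with each $\mu_k$ in place of $\mu$. The elementary fact I would use throughout is that restriction $g \mapsto g|_{\R}$ is a bijection from $\CC(\overline{\R})$ onto $\bigcup_{u,v \in \R}\CC_{u,v}(\R)$, the set of bounded continuous functions on $\R$ admitting finite limits at $\pm\infty$; in particular $\CC_0(\R)$ sits inside $\CC(\overline{\R})$ via extension by $0$.

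For the direct implication, assuming $\mu_k \to \mu$ weakly on $\R$, I would take an arbitrary $g \in \CC(\overline{\R})$, note that $f := g|_{\R} \in \CC_b(\R)$, so $\int_{\R} f\, d\mu_k \to \int_{\R} f\, d\mu$, and rewrite this as $\int_{\overline{\R}} g\, d\widetilde{\mu_k} \to \int_{\overline{\R}} g\, d\widetilde{\mu}$ using the identity above; since $\overline{\R}$ is compact and $g$ is arbitrary, this is exactly $\widetilde{\mu_k} \to \widetilde{\mu}$ weakly. This direction needs nothing beyond the definition of weak convergence.

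For the converse, assuming $\widetilde{\mu_k} \to \widetilde{\mu}$ weakly on $\overline{\R}$, I would first establish tightness of $(\mu_k)_{k\ge 1}$ in $\R$. For $a > 0$ the set $F_a = \overline{\R}\setminus(-a,a)$ is closed in $\overline{\R}$, so the portmanteau theorem gives $\limsup_k \widetilde{\mu_k}(F_a) \le \widetilde{\mu}(F_a)$, that is $\limsup_k \mu_k(\{|x|\ge a\}) \le \mu(\{|x|\ge a\})$, the mass of $\widetilde{\mu}$ at $\pm\infty$ being zero; since $\mu(\{|x|\ge a\}) \to 0$ as $a\to\infty$ and each $\mu_k$ is individually tight, this yields for every $\varepsilon>0$ an $a_\varepsilon>0$ with $\sup_k \mu_k(\{|x|\ge a_\varepsilon\}) \le \varepsilon$ and $\mu(\{|x|\ge a_\varepsilon\})\le\varepsilon$. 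Then, given $f \in \CC_b(\R)$ and $\varepsilon>0$, I would truncate: set $g(x) = f(\max(-a_\varepsilon,\min(a_\varepsilon,x)))$ for $x\in\R$, extended by $g(\pm\infty) = f(\pm a_\varepsilon)$, which belongs to $\CC(\overline{\R})$, satisfies $\n{g}{\infty}\le\n{f}{\infty}$, and coincides with $f$ on $[-a_\varepsilon,a_\varepsilon]$; hence $|\int_{\R} f\, d\nu - \int_{\overline{\R}} g\, d\widetilde{\nu}| \le 2\n{f}{\infty}\varepsilon$ uniformly over $\nu \in \{\mu,\mu_1,\mu_2,\ldots\}$. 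Combining this with $\int_{\overline{\R}} g\, d\widetilde{\mu_k} \to \int_{\overline{\R}} g\, d\widetilde{\mu}$ (the hypothesis applied to $g\in\CC(\overline{\R})$) and letting $\varepsilon\to 0$ gives $\int_{\R} f\, d\mu_k \to \int_{\R} f\, d\mu$ for every $f\in\CC_b(\R)$, i.e. $\mu_k \to \mu$ weakly on $\R$.

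The one genuinely substantial point is the tightness step of the converse: this is precisely where the property $\widetilde{\mu}(\{-\infty,+\infty\})=0$ enters, ruling out escape of mass to infinity. Equivalently, one could observe that the embedding $\CC_0(\R)\hookrightarrow\CC(\overline{\R})$ already yields vague convergence $\mu_k\to\mu$ on $\R$, which upgrades to weak convergence because the prospective limit $\mu$ is a probability measure; the tightness phrasing above merely keeps the argument self-contained.
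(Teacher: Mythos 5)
Your proof is correct. The implication ``$\mu_k$ converges weakly to $\mu$ $\Rightarrow$ $\widetilde{\mu_k}$ converges weakly to $\tilde{\mu}$'' is argued exactly as in the paper: restrict a test function in $\CC(\overline{\R})$ to $\R$, where it is bounded and continuous, and use that the extended measures put no mass at $\pm\infty$. For the converse the paper is much terser: it tests against $f \in \CC_0(\R)$ extended by $0$ to $\overline{\R}$ and concludes at once, implicitly relying on the standard fact (already used in the proof of Proposition \ref{p:continue}) that convergence of probability measures on a locally compact separable metric space against $\CC_0$ test functions, when the limit is itself a probability measure, is equivalent to weak convergence. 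Your portmanteau--tightness--truncation argument unpacks precisely the content of that fact: the step where $\tilde{\mu}(\{-\infty,+\infty\})=0$ forbids escape of mass is the same mechanism that upgrades vague convergence to weak convergence, as you yourself note in your closing remark. The trade-off is clear: your route is self-contained at the level of test functions (no appeal to the vague-versus-weak equivalence, at the price of invoking the portmanteau inequality for the closed sets $\overline{\R}\setminus(-a,a)$), while the paper's route is shorter but outsources the key point to a quoted equivalence. Both are complete proofs.
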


\begin{proof}
Assume that $\widetilde{\mu_k} \xrightarrow[k \to +\infty]{\mathbf{w}} \Tilde{\mu}$. For $f \in \CC_0(\R)$, $f$ can be extended to a function $\Tilde{f} \in \CC(\overline{\R})$ by letting $\Tilde{f}(\pm \infty)=0$. With this definition, for any probability measure $\nu$ on $\R$, we have that $ \int_{\R} f(x) d \nu(x) =  \int_{\overline{\R}} \Tilde{f}(x) d \Tilde{\nu}(x)$. The fact that $\lim_{k \to +\infty} \int_{\overline{\R}} \Tilde{f}(x) d \widetilde{\mu_k}(x) =  \int_{\overline{\R}} \Tilde{f}(x) d \Tilde{\mu}(x)$ thus implies that 
 $\lim_{k \to +\infty} \int_{\R} f(x) d \mu_k(x) =  \int_{\R} f(x) d \mu(x)$. So we have proved that $\mu_k \xrightarrow[k \to +\infty]{\mathbf{w}} \mu$. 
 
 Now assume that $\mu_k \xrightarrow[k \to +\infty]{\mathbf{w}} \mu$. For $f \in  \CC(\overline{\R})$, the restriction of $f$ to $\R$ is a bounded continuous function, so we have that $\lim_{k \to +\infty} \int_{\R} f(x) d \mu_k(x) =  \int_{\R} f(x) d \mu(x)$, whence 
 $$\lim_{k \to +\infty} \int_{\overline{\R}} f(x) d \widetilde{\mu_k}(x) =  \int_{\overline{\R}} f(x) d \Tilde{\mu}(x).$$ So we have proved that $\widetilde{\mu_k} \xrightarrow[k \to +\infty]{\mathbf{w}} \Tilde{\mu}$.
\end{proof}

We extend $(P_t)_{t \in \R_+}$ to a family of Markov kernels $(\Tilde{P}_t)_{t \in \R_+}$ on $\overline{\R}$ by setting, for all $x \in \R$: $\Tilde{P}_t(x,\cdot)= \widetilde{P_t(x,\cdot)}$, $\Tilde{P}_t(+\infty,\cdot)=\delta_{+\infty}(\cdot)$ and $\Tilde{P}_t(-\infty,\cdot)=\delta_{-\infty}(\cdot)$. 

\begin{proposition}\label{p:extension-Feller}
If $(P_t)_{t\in\R_+}$ is stochastically monotone and enjoys the Feller property, then $(\Tilde{P}_t)_{t\in\R_+}$ has the Feller property.
\end{proposition}

\begin{proof}
We first prove (Fa), i.e. for every $f\in \CC(\overline{\R})$ and $t \geq 0$, $ \Tilde{P}_t f \in \CC(\overline{\R})$. Given $ f\in \CC(\overline{\R})$, we denote by $f_{|\R}$ the restriction of $f$ to $\R$, and observe that $f_{|\R} \in  \CC_{u,v}(\R)$, where $u=f(-\infty)$ and $v=f(+\infty)$, so, by Proposition \ref{p:Feller-monot}, $ P_t f_{|\R} \in \CC_{u,v}(\R)$. For $x \in \R$, $\Tilde{P}_t f(x) = P_t f_{|\R}(x)$, while $\Tilde{P}_t f(-\infty) = u$ and $\Tilde{P}_t f(+\infty)=v$. We deduce that $\Tilde{P}_t f$ is continuous on $\overline{\R}$.

We now prove (Fb'), i.e. for every $ f\in \CC(\overline{\R})$ and $x \in \overline{\R}$, $\lim_{t \to 0} \Tilde{P}_t f(x) = f(x)$. This is immediate for $x=\pm \infty$ since then $\Tilde{P}_tf(x) = f(x)$ for all $t$. For all $x \in \R$, we have that $P_t(x,\cdot)$ converges weakly to $\delta_x(\cdot)$ as $t \to 0$, so that, since $ f_{|\R} \in \CC_b(\R)$, $\lim_{t \to 0} P_t f_{|\R}(x) = f_{|\R}(x)$, whence $\lim_{t \to 0} \Tilde{P}_t f(x) = f(x)$.

\end{proof}

\begin{remark}
Observe that it is not true, without additional assumptions, that the extension to $\overline{\R}$ of a Feller Markov semigroup on $\R$ always inherits the Feller property (this is true nonetheless for the extension to the one-point compactification $\R \cup \{\infty\}$). Consider for instance the semigroup $(P_t)$ associated with a standard Brownian motion $(B_t)_{t \in \R_+}$ on $\R$ whose sign is reversed at each occurrence of an independent rate $1$ Poisson process $(N_t)_{t \in \R_+}$, i.e. $X_t = B_t \cdot (-1)^{N_t}$. Indeed, for $f \in \CC_{a,b}(\R)$ with $a \neq b$, $\lim_{x \to -\infty} P_tf(x) = a \P(N_t \mbox{ is even})+b \P(N_t \mbox{ is odd})$, so $P_t f \notin \CC_{a,b}(\R)$ for $t > 0$.
\end{remark}

\section{Proof of Theorem \ref{th:extension}}\label{s:extension}

This section is devoted to the proof of Theorem \ref{th:extension}. First, in Section \ref{ss:discrete-flow}, we define the kernels $Q_t^{(n)}$ by using, for $\mathbf{x}=(x_1,\ldots, x_n)$ and $t \geq 0$, the classical comonotone coupling of $P_t(x_1,\cdot),\ldots,P_t(x_n,\cdot)$, which turns out to be order-preserving thanks to the stochastic monotonicity of $(P_t)_{t\in\R_+}$ (more precisely, we work with the extended semigroup $(\Tilde{P}_t)_{t\in\R_+}$ on $\overline{\R}$).
 Then, in Section \ref{ss:construction}, we construct the kernels $P_t^{(n)}$ by iterating the composition of $Q_{s}^{(n)}$ a large number of times for smaller and smaller $s$, then taking the limit. One key tool to prove that this procedure leads to a well-behaved limit is Proposition \ref{p:equicont}, which enables the transfer of regularity properties of the kernels $P_t$ to the kernels $Q_t^{(n)}$, thanks to the interplay between order-preserving properties of kernels and estimates in the  $W_1^{(n)}$ distance.

\subsection{Discrete-time flow}\label{ss:discrete-flow}

For $x \in \R$ and $t \geq 0$, let $F_{x,t}^{[-1]} : [0,1] \to \overline{\R}$ denote the quantile function of the probability distribution $P_t(x,\cdot)$, i.e. $F_{x,t}^{[-1]}(u) = \inf \{ y \in \R :  P_t(x,(-\infty,y] )\geq u \}$.
We extend the definition to $x \in \overline{\R}$ by letting $F_{-\infty,t}^{[-1]} \equiv -\infty$ and $F_{+\infty,t}^{[-1]} \equiv +\infty$.

\begin{lemma}\label{l:pseudo-inv}
The family of functions $F_{x,t}^{[-1]}$ enjoys the following properties:
\begin{itemize}

\item[(a)] for all $x \in  \overline{\R}$ and $t \geq 0$, the map $F_{x,t}^{[-1]}$ is non-decreasing,

\item[(b)] for all $x \in \R$, $t \geq 0$ and $u \in (0,1)$, $F_{x,t}^{[-1]}(u) \in \R$,

\item[(c)] for all $t \geq 0$ and $u \in (0,1)$, the map $x \mapsto F_{x,t}^{[-1]}(u)$ is non-decreasing,

\item[(d)] If $U$ is a random variable following the uniform distribution on the interval $[0,1]$, then, for all $x,t$, the distribution of the random variable $F_{x,t}^{[-1]}(U)$ is $\Tilde{P}_t(x,\cdot)$,

\item[(e)] for all $t \in \R$, and $x \in \overline{\R}$, except for an at most countable set of values of $u \in [0,1]$, the map $y \mapsto  F_{y,t}^{[-1]}(u)$ is continuous at $x$.

\end{itemize}

\end{lemma}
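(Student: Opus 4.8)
The plan is to dispose of (a)--(d) as adaptations of classical facts about quantile functions, and to concentrate the effort on (e). For (a): if $u_1\le u_2$ then $\{y\in\R:P_t(x,\,]-\infty,y])\ge u_2\}\subseteq\{y\in\R:P_t(x,\,]-\infty,y])\ge u_1\}$, so their infima are ordered accordingly, while for $x=\pm\infty$ the map is constant. For (b): since $P_t(x,\cdot)$ is a genuine probability measure on $\R$, its distribution function $F_x(y):=P_t(x,\,]-\infty,y])$ tends to $0$ as $y\to-\infty$ and to $1$ as $y\to+\infty$, so for $u\in\,]0,1[$ the set $\{y:F_x(y)\ge u\}$ is non-empty and bounded below, whence $F_{x,t}^{[-1]}(u)\in\R$. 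For (c): stochastic monotonicity \eqref{e:stoch-monot} gives, for $x\le x'$ in $\R$, that $P_t(x,\cdot)\st P_t(x',\cdot)$, hence (testing against indicators of half-lines $]z,+\infty[$) $F_x\ge F_{x'}$ pointwise, so $\{y:F_x(y)\ge u\}\supseteq\{y:F_{x'}(y)\ge u\}$ and $F_{x,t}^{[-1]}(u)\le F_{x',t}^{[-1]}(u)$; the cases involving $\pm\infty$ follow at once from the conventions and from (b). For (d): I would use the standard equivalence $F_{x,t}^{[-1]}(u)\le y\iff u\le F_x(y)$ (for $y\in\R$), valid when $x\in\R$ because right-continuity of $F_x$ makes $\{y:F_x(y)\ge u\}$ a closed set containing its infimum; then $\P\big(F_{x,t}^{[-1]}(U)\le y\big)=\P(U\le F_x(y))=F_x(y)$, and since almost surely $U\in\,]0,1[$ and hence (by (b)) $F_{x,t}^{[-1]}(U)\in\R$, the law of $F_{x,t}^{[-1]}(U)$ is the probability measure on $\R$ with distribution function $F_x$, namely $P_t(x,\cdot)$, whose canonical extension to $\overline{\R}$ is $\tilde P_t(x,\cdot)$; for $x=\pm\infty$ the variable $F_{x,t}^{[-1]}(U)$ is the constant $\pm\infty$, with law $\delta_{\pm\infty}=\tilde P_t(\pm\infty,\cdot)$.

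The substance is (e), and the plan is to reduce it to a statement about weak convergence. First I would check that, for every $y\in\overline{\R}$ and every $u\in\,]0,1[$, $F_{y,t}^{[-1]}(u)$ equals the quantile function $q_{\tilde P_t(y,\cdot)}(u):=\inf\{z\in\overline{\R}:\tilde P_t(y,[-\infty,z])\ge u\}$ of the probability measure $\tilde P_t(y,\cdot)$ on the compact metric space $\overline{\R}$ --- a routine verification from the definitions, the only discrepancy occurring at $(u,y)=(0,+\infty)$, where the convention $F_{+\infty,t}^{[-1]}(0)=+\infty$ differs from $q_{\delta_{+\infty}}(0)=-\infty$, which is irrelevant since the exceptional set is merely required to be countable. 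Next, by \propref{p:extension-Feller} the semigroup $(\tilde P_t)_{t\ge0}$ is Feller, so \propref{p:continue} applied on the compact space $\overline{\R}$ shows that $y\mapsto\tilde P_t(y,\cdot)$ is continuous from $\overline{\R}$ into $\PP(\overline{\R})$; in particular $y_k\to x$ forces $\tilde P_t(y_k,\cdot)\to\tilde P_t(x,\cdot)$ weakly. Thus (e) follows from the following general (and essentially classical) fact, which I would isolate as an auxiliary lemma: \emph{if $\mu,\mu_1,\mu_2,\ldots\in\PP(\overline{\R})$ and $\mu_k\to\mu$ weakly, then $q_{\mu_k}(u)\to q_\mu(u)$ for every $u\in\,]0,1[$ at which $q_\mu$ is continuous}. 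Granting it, fix $t$ and $x$: the non-decreasing function $q_{\tilde P_t(x,\cdot)}$ on $[0,1]$ is continuous off an at most countable set $D$, and for $u\in\,]0,1[\setminus D$ and any $y_k\to x$ in $\overline{\R}$ one gets $F_{y_k,t}^{[-1]}(u)=q_{\tilde P_t(y_k,\cdot)}(u)\to q_{\tilde P_t(x,\cdot)}(u)=F_{x,t}^{[-1]}(u)$, i.e.\ $y\mapsto F_{y,t}^{[-1]}(u)$ is continuous at $x$; the exceptional set of $u$'s is contained in $D\cup\{0,1\}$, hence at most countable.

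To prove the auxiliary lemma I would combine the equivalence $q_\nu(u)\le z\iff u\le\nu([-\infty,z])$ --- valid on $\overline{\R}$ since $z\mapsto\nu([-\infty,z])$ is right-continuous --- with the portmanteau inequalities $\limsup_k\mu_k(C)\le\mu(C)$ for closed $C\subseteq\overline{\R}$ and $\liminf_k\mu_k(O)\ge\mu(O)$ for open $O$. For the lower bound: given $z<q_\mu(u)$, pick $z<z'<q_\mu(u)$; then $\mu([-\infty,z'])<u$, so $\limsup_k\mu_k([-\infty,z])\le\mu([-\infty,z'])<u$, hence $q_{\mu_k}(u)>z$ eventually, and letting $z\uparrow q_\mu(u)$ gives $\liminf_k q_{\mu_k}(u)\ge q_\mu(u)$. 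For the upper bound --- the place where continuity of $q_\mu$ at $u$ and the restriction $u<1$ are used --- given $z>q_\mu(u)$, continuity yields $u'\in\,]u,1[$ with $q_\mu(u')<z$; pick $q_\mu(u')<z'<z$; then $\mu([-\infty,z'[)\ge u'>u$, so $\liminf_k\mu_k([-\infty,z'])\ge\mu([-\infty,z'[)>u$, hence $q_{\mu_k}(u)\le z'<z$ eventually, and letting $z\downarrow q_\mu(u)$ gives $\limsup_k q_{\mu_k}(u)\le q_\mu(u)$.

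The main obstacle is (e). A purely order-theoretic argument from (a)--(c) is not enough: stochastic monotonicity controls $y\mapsto F_{y,t}^{[-1]}(u)$ only one-sidedly (it does let one prove that $\lim_{y\uparrow x}F_{y,t}^{[-1]}(u)=F_{x,t}^{[-1]}(u)$ for \emph{every} $u$, by comparing distribution functions), but the existence of the right-hand limit and its equality with $F_{x,t}^{[-1]}(u)$ off a countable set genuinely rely on the Feller property --- that is, on the weak continuity of $y\mapsto\tilde P_t(y,\cdot)$ on $\overline{\R}$ furnished by \propref{p:extension-Feller}. The remaining annoyances --- the behaviour at $u\in\{0,1\}$ and the convention clash at $(u,y)=(0,+\infty)$ --- are harmless precisely because a countable exceptional set is permitted.
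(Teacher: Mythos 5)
Your proof is correct, and for parts (a)--(d) it matches the paper's (which simply declares these immediate or classical, citing the proof of Theorem~25.6 in Billingsley for (d)). For part (e) you take a genuinely different, more unified route. The paper splits into two cases: for $x\in\R$ it invokes the weak convergence $P_t(x_k,\cdot)\to P_t(x,\cdot)$ on $\R$ together with the classical fact (again quoted from Billingsley's proof of Theorem~25.6) that quantile functions converge at continuity points, plus countability of the discontinuities of a monotone function; for $x=\pm\infty$ it gives a separate direct argument from the tail limits \eqref{e:limite-loin} established in the proof of Proposition~\ref{p:Feller-monot}, which in fact yields convergence for \emph{every} $u\in\,]0,1]$ there, with no exceptional set. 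You instead identify $F_{y,t}^{[-1]}(u)$ with the quantile function of $\tilde P_t(y,\cdot)$ on the compact space $\overline{\R}$, use Propositions~\ref{p:extension-Feller} and~\ref{p:continue} to get weak continuity of $y\mapsto\tilde P_t(y,\cdot)$ on all of $\overline{\R}$ at once, and prove the quantile-convergence lemma from scratch via portmanteau. This buys a self-contained and case-free argument at the cost of the extra bookkeeping about the convention clash at $(u,y)=(0,+\infty)$, which you handle correctly. One caveat on a side remark only: your parenthetical claim that stochastic monotonicity alone yields $\lim_{y\uparrow x}F_{y,t}^{[-1]}(u)=F_{x,t}^{[-1]}(u)$ for every $u$ is false without the Feller property (take $P_t(y,\cdot)=\delta_0$ for $y<0$ and $\delta_1$ for $y\ge 0$: the left limit at $0$ is $0\ne 1$); monotonicity only gives existence of the one-sided limit and the inequality $\le$. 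This does not affect your actual proof, which never uses that claim.
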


\begin{proof}
Properties (a) and (b) are immediate consequences of the definition and of the fact that the c.d.f. of a real-valued random variable is non-decreasing, with limits $0$ and $1$ at $-\infty$ and $+\infty$, respectively. Property (c) is a consequence of stochastic monotonicity: if $x_1,x_2 \in \R$ are such that $x_1 \leq x_2$, then, for all $y \in \R$, $P_t(x_1,(-\infty,y]) \geq P_t(x_2,(-\infty,y])$. Extension to $\overline{\R}$ is immediate in view of the definition of $F_{x,t}^{[-1]}$ when $x = \pm \infty$. Property (d) is obvious when $x = \pm \infty$, and classical when $x \in \R$, we e.g. refer to the proof of Theorem 25.6 in \cite{Bil95}. We now deal with property (e). Given $t \geq 0$, $x \in \R$ and a sequence $(x_k)_{k \geq 1}$ of real numbers such that $\lim_{k \to +\infty} x_k = x$, we have the weak convergence $P_t(x_k,\cdot)  \xrightarrow[k \to +\infty]{\mathbf{w}}  P_t(x,\cdot)$, so that, refering again to  the proof of Theorem 25.6 in \cite{Bil95}, whenever $F_{x,t}^{[-1]}$ is continuous at a certain $u \in (0,1)$, we have that $\lim_{k \to +\infty} F_{x_k,t}^{[-1]}(u) =  F_{x,t}^{[-1]}(u)$. Since $F_{x,t}^{[-1]}$ is non-decreasing, the number of points of discontinuity is at most countable. Assume now that $x=+\infty$. From the proof of Proposition \ref{p:Feller-monot}, we have that, for any $a > 0$,
$\lim_{y \to +\infty}  P_t(y,(-\infty,a]) = 0$, so that, for any $u \in (0,1)$,   $\lim_{y \to +\infty} F_{y,t}^{[-1]}(u) = +\infty =  F_{+\infty,t}^{[-1]}(u)$. The case $x=-\infty$ is treated symmetrically.

\end{proof}

Given $t \geq 0$ and $\mathbf{x}=(x_1,\ldots, x_n) \in \overline{\R}^n$, define $Q_t^{(n)}(\mathbf{x},\cdot)$ as the distribution on $\overline{\R}^n$ of the random vector $Z_{\mathbf{x}}=\left(F_{x_i,t}^{[-1]}(U) \right)_{1 \leq i \leq n}$, where $U$ is a random variable following the uniform distribution on the interval $[0,1]$. 

\begin{proposition}\label{p:flot-discret}
For all $t \geq 0$, and $n \geq 2$, $Q_t^{(n)}$ is an order-preserving\footnote{We extend the definition of an order-preserving Markov kernel on $\R^n$ to that of an order-preserving Markov kernel on $\overline{\R}^n$ in the obvious way.} Markov kernel on $\overline{\R}^n$. Moreover, for all $t \geq 0$, the family $(Q_t^{(n)})_{n \geq 2}$ is a consistent extension of $\Tilde{P}_t$. 
\end{proposition}

\begin{proof}
By Lemma \ref{l:pseudo-inv} (c), we have that, for any $u \in (0,1)$, $\left(F_{x_i,t}^{[-1]}(u) \right)_{1 \leq i \leq n}$ is order-compatible with $x_1,\ldots,x_n$.
To check that $Q_t^{(n)}$ is a Markov kernel, we note that, by definition, for all $\mathbf{x}$, $Q_t^{(n)}(\mathbf{x},\cdot)$ is a probability measure on $\overline{\R}^n$. Now consider a function $f \in \CC(\overline{\R}^n)$, and define  $Q_t^{(n)} f (\mathbf{x}) = \int_{\overline{\R}^n} f(\mathbf{y}) d Q_t^{(n)}(\mathbf{x},y)$. Given $\mathbf{x} \in \overline{\R}^n$ and a sequence $(\mathbf{x}_k)_{k \geq 1}$ such that 
$\lim_{k \to +\infty} \mathbf{x}_k = \mathbf{x}$, we deduce from Lemma \ref{l:pseudo-inv} (e) that, almost surely, $\lim_{k \to +\infty} Z_{\mathbf{x}_k} = Z_{\mathbf{x}}$, so that, by continuity of $f$, almost surely, $\lim_{k \to +\infty} f(Z_{\mathbf{x}_k}) = f(Z_{\mathbf{x}})$. We thus obtain by dominated convergence that $\lim_{k \to +\infty} Q_t^{(n)} f (\mathbf{x}_k) = Q_t^{(n)} f(\mathbf{x})$. We deduce that $Q_t ^{(n)}f \in  \CC(\overline{\R}^n)$, then invoke Lemma \ref{l:noyau-cont} to deduce that $Q_t^{(n)}$ is a Markov kernel.

That $Q_t^{(n)}$ is an extension of $\Tilde{P}_t$ is a direct consequence of Lemma \ref{l:pseudo-inv} (d). Consistency is a consequence of the fact that, by definition, 
$\pi^n_{i_1,\ldots,i_k}(Z_{\mathbf{x}}) = \left(F_{x_{i_j},t}^{[-1]}(U) \right)_{1 \leq j \leq k} = Z_{\pi^n_{i_1,\ldots,i_k}(\mathbf{x})}$.

\end{proof}

\subsection{Construction of the limit $R^{(n)}_t$}\label{ss:construction}

Now we call $D_+$ the set of positive dyadic rational numbers. For any fixed $t \in D_+$, we write $t=k2^{-m_0}\in D_+$ where $k \geq 1$ and $m_0 \geq 0$ are integers, and $m_0$ has the minimum possible value in such an expression. Then, for every integer $m \geq m_0$, we let
$$Q^{(n),m}_t = \left[Q^{(n)}_{2^{-m}}\right]^{k2^{m-m_0}},$$
i.e. $Q^{(n),m}_t$  is the repeated composition of kernels $Q^{(n)}_{2^{-m}} \cdots Q^{(n)}_{2^{-m}}$ with a total of $k2^{m-m_0}$ kernels in the composition.

As a result, $(Q^{(n),m}_t)_{m \geq m_0}$ is a sequence of Markov kernels on $\overline{\R}^n$. Moreover, from Proposition \ref{p:flot-discret}, for all $t \in D_+$, $n \geq 2$, and $m \geq m_0$, $Q_t^{(n),m}$ is order-preserving\footnote{Using the fact that the composition of order-preserving Markov kernels is still an order-preserving Markov kernel, see Lemma \ref{l:compose-order} in Section \ref{s:appendix-easy}.}, and, for all $t \in D_+$ and $m \geq m_0$, the family $(Q_t^{(n),m})_{n \geq 2}$ is a consistent extension\footnote{Using the fact that, if $(K^{(n)})_{n \geq 2}$ and $(L^{(n)})_{n \geq 2}$ are consistent extensions, respectively of $K$ and $L$, then $(K^{(n)} L^{(n)})_{n \geq 2}$ is a consistent extension of $KL$, see Lemma \ref{l:compose-consistent-ext} in Section \ref{s:appendix-easy}.} of $\Tilde{P}_t$. Now let $\mathbf{Q}_t^{(n),m} =  \check{Q_t^{(n),m}}$.

Denote by $W_1^{(n)}$ the Wasserstein (or Kantorovich-Rubinstein) distance on $\mathcal{P}(\overline{\R}^n)$, where $\overline{\R}^n$ is equipped with the distance $d_n$ defined in Section \ref{ss:R-bar}.
\begin{proposition}\label{p:equicont}
For all $m \geq m_0$, the map $\mathbf{Q}_t^{(n),m}$ from $\overline{\R}^n$ to  $\mathcal{P}(\overline{\R}^n)$ satisfies:
$$W_1^{(n)}\left( \mathbf{Q}_t^{(n),m}(\mathbf{x}), \mathbf{Q}_t^{(n),m}(\mathbf{y})   \right) \leq \sum_{i=1}^n  | \Tilde{P}_t \phi(y_i) - \Tilde{P}_t \phi(x_i)|.$$

\end{proposition}

\begin{proof}
Let $\mathbf{z}=(\mathbf{x},\mathbf{y})=(x_1,\ldots, x_n,y_1,\ldots, y_n)$. Let $(X_1,\ldots, X_n,Y_1,\ldots,Y_n)$ be a random vector on $\overline{\R}^n$ whose distribution is $Q_t^{(2n),m}(\mathbf{z},\cdot)$.
By the consistency property, the distribution of $(X_1,\ldots, X_n)$ is $Q_t^{(n),m}(\mathbf{x},\cdot)$ and the distribution of $(Y_1,\ldots, Y_n)$ is $Q_t^{(n),m}(\mathbf{y},\cdot)$. Moreover, the order-preserving property implies that, with probability $1$, for all $1 \leq i \leq n$, the relative order of $X_i$ and $Y_i$ is the same\footnote{To be explicit: if $x_i \leq y_i$, then almost surely $X_i \leq Y_i$, while, if $y_i \leq x_i$, then almost surely $Y_i \leq X_i$.} as the relative order between $x_i$ and $y_i$. Now consider a function $f \in \Lip(\overline{\R}^n)$ such that $\n{f}{\sLip} \leq 1$. Let $\mathbf{X} = (X_1,\ldots, X_n)$ and $\mathbf{Y} = (Y_1,\ldots, Y_n)$. We have that 
$$\left| Q_t^{(n),m}f(\mathbf{y}) - Q_t^{(n),m}f(\mathbf{x})\right| = \left|  \E f(\mathbf{Y}) -  \E f(\mathbf{X})\right| \leq \E \left| f(\mathbf{Y}) - f(\mathbf{X})\right|.$$ 
Since $\n{f}{\sLip} \leq 1$,
 $$\E \left| f(\mathbf{Y}) - f(\mathbf{X})\right| \leq \E \sum_{i=1}^n | \phi(Y_i) - \phi(X_i) | = \sum_{i=1}^n \E | \phi(Y_i) - \phi(X_i) |.$$
Consider $1 \leq i \leq n$. Since $\phi$ is non-decreasing and the relative order of $X_i$ and $Y_i$ is non-random, we have that 
$$ \E | \phi(Y_i) - \phi(X_i) | = \left|     \E    \left(   \phi(Y_i) - \phi(X_i) \right)       \right| = \left|     \E  \phi(Y_i) -  \E \phi(X_i) \right|.$$
Since $Q_t^{(n),m}$ is an extension of $\Tilde{P}_t$, we have that $ \E  \phi(X_i) =  \Tilde{P}_t \phi(x_i)$ and $ \E  \phi(Y_i) =  \Tilde{P}_t \phi(y_i)$.
\end{proof}

\begin{corollary}\label{c:equicont}
For all $n \geq 2$ and $t \in D_+$, the family $\left(\mathbf{Q}_t^{(n),m} \right)_{m \geq m_0}$  of maps from $\overline{\R}^n$ to $\mathcal{P}(\overline{\R}^n)$ is equicontinuous.
\end{corollary}

\begin{proof}
Note that $\phi \in \CC(\overline{\R})$, so the Feller property of $\Tilde{P}_t$ implies that  $\Tilde{P}_t \phi$ is continuous.
\end{proof}

\begin{proof}[Proof of Theorem \ref{th:extension}.]
Denote by $\mathfrak{C}_n$ the set of continuous functions from $\overline{\R}^n$ to $\mathcal{P}(\overline{\R}^n)$, equipped with the distance $$d_{\mathfrak{C}_n}(\mathbf{K},\mathbf{L}) = \sup\limits_{\mathbf{x} \in \overline{\R}^n} W_1^{(n)}(\mathbf{K}(\mathbf{x}),\mathbf{L}(\mathbf{x})).$$ Remember that $\overline{\R}^n$ is a compact metric space, so that $\mathcal{P}(\overline{\R}^n)$ is also a compact metric space, hence a complete metric space.  As a consequence, for each $t \in D_+$ and $n \geq 2$, in view of Corollary \ref{c:equicont}, we can invoke the Arzelà-Ascoli theorem (see e.g. \cite{Dix84}) to show the convergence along a subsequence: the sequence of maps $\left(\mathbf{Q}_t^{(n),m_k} \right)_{k \geq 1}$ converges, as $k \to +\infty$, to a limiting map $\mathbf{R}_t^{(n)}$, in the sense of uniform convergence of continuous maps from $\overline{\R}^n$ to $\mathcal{P}(\overline{\R}^n)$. Moreover, by diagonal extraction, we can assume that convergence occurs simultaneously for every $t$ in (the countable set) $D_+$ and every $n \geq 2$. To sum up, we have a strictly increasing sequence of integers $(m_k)_{k \geq 1}$ and, for every $t \in D_+$ and $n \geq 2$, an element  $\mathbf{R}_t^{(n)}$ of $\mathfrak{C}_n$, such that\footnote{Note that $\mathbf{Q}_t^{(n),m_k}$ is defined as soon as $m_k \geq m_0$, where $m_0$ depends on $t$.} 
\begin{equation}\label{e:cv-Cn} \lim_{k \to +\infty} d_{\mathfrak{C}_n}(\mathbf{Q}_t^{(n),m_k}  , \mathbf{R}_t^{(n)}) = 0.\end{equation}
Now let $R_t^{(n)} = \hat{\mathbf{R}_t^{(n)}}$, and note that $R_t^{(n)}$  is a Markov kernel (e.g. since $\mathbf{R}_t^{(n)}$ is a continuous map from $\overline{\R}^n$ to $\mathcal{P}(\overline{\R}^n)$, see Lemma \ref{l:noyau-cont}). Moreover, by continuity of the projection maps $\pi^n_{i_1,\ldots,i_k}$ and Lemma \ref{l:order-preserving-limit}, for each $t \in D_+$, the family of Markov kernels $(R_t^{(n)})_{n \geq 2}$ inherits from  $(Q_t^{(n),m_k})_{n \geq 2}$ the property of being a family of order-preserving kernels forming a consistent extension of $\Tilde{P}_t$.

Let us now check that the semigroup property (restricted to $D_+$)  holds for our family of Markov kernels: 
\begin{equation}\label{e:semigroupe-D} \forall s,t \in D_+,  \  R_{s+t}^{(n)} = R_s^{(n)} R_t^{(n)}.\end{equation} By construction, given $s,t \in D_+$, we have that $ Q_s^{(n),m_k}  Q_t^{(n),m_k} =  Q_{s+t}^{(n),m_k}$ for all large enough $k$. Now consider $f \in \Lip(\overline{\R}^n)$ such that $\n{f}{\sLip} \leq 1$, and write 
$$Q_s^{(n),m_k}  Q_t^{(n),m_k} f  -  R_{s}^{(n)}  R_{t}^{(n)}f\!=\! Q_s^{(n),m_k}  Q_t^{(n),m_k} f  - Q_s^{(n),m_k}  R_t^{(n)} f  + Q_s^{(n),m_k}  R_t^{(n)} f  - R_s^{(n)}  R_t^{(n)} f.$$
Using Lemma \ref{l:contract-beta}, we have that,  for all $\mathbf{x} \in \overline{\R}^n$, 
 \begin{equation}\label{e:moitie-1} \left|Q_s^{(n),m_k}  Q_t^{(n),m_k} f(\mathbf{x}) - Q_s^{(n),m_k}  R_t^{(n)} f(\mathbf{x}) \right| \leq d_{\mathfrak{C}_n}(\mathbf{Q}_t^{(n),m_k}  , \mathbf{R}_t^{(n)}).\end{equation}
On the other hand, $R_t^{(n)} f \in \CC( \overline{\R}^n)$ since $f  \in \CC( \overline{\R}^n)$ and $\mathbf{R}_t^{(n)}$ is a continuous map from $\overline{\R}^n$ to $\mathcal{P}(\overline{\R}^n)$ (see Proposition \ref{p:continue}). Since, for every $\mathbf{x} \in \overline{\R}^n$, we have the weak convergence $ Q_s^{(n),m_k}(\mathbf{x},\cdot)   \xrightarrow[k \to +\infty]{\mathbf{w}} R_s^{(n)}(\mathbf{x},\cdot)$, we deduce that 
\begin{equation}\label{e:moitie-2} \lim_{k \to +\infty} Q_s^{(n),m_k} R_t^{(n)} f (\mathbf{x}) =   R_s^{(n)} R_t^{(n)} f (\mathbf{x}).\end{equation}
Combining \eqref{e:cv-Cn}, \eqref{e:moitie-1} and \eqref{e:moitie-2}, we have that $\lim_{k \to +\infty} Q_s^{(n),m_k}  Q_t^{(n),m_k} f (\mathbf{x}) = R_s^{(n)}  R_t^{(n)} f(\mathbf{x})$, for all $\mathbf{x} \in \overline{\R}^n$. Now the weak convergence  $ Q_{s+t}^{(n),m_k}(\mathbf{x},\cdot)  \xrightarrow[k \to +\infty]{\mathbf{w}} R_{s+t}^{(n)}(\mathbf{x},\cdot)$ implies that we have $\lim_{k \to +\infty} Q_{s+t}^{(n),m_k}  f (\mathbf{x}) = R_{s+t}^{(n)} f(\mathbf{x})$. Using the identity  $ Q_s^{(n),m_k}  Q_t^{(n),m_k} =  Q_{s+t}^{(n),m_k}$, we deduce that 
$\lim_{k \to +\infty} Q_s^{(n),m_k}  Q_t^{(n),m_k} f (\mathbf{x}) = R_{s+t}^{(n)} f(\mathbf{x})$, so that 
$ R_s^{(n)}  R_t^{(n)} f(\mathbf{x}) = R_{s+t}^{(n)} f(\mathbf{x})$, and \eqref{e:semigroupe-D} is established\footnote{Indeed, we have established that, for all $\mathbf{x} \in \overline{\R}^n$, $W_1^{(n)}( R_s^{(n)}  R_t^{(n)}(\mathbf{x},\cdot) ,  R_{s+t}^{(n)}(\mathbf{x},\cdot)) =0$.}.

Now for $\mathbf{x} = (x_1,\ldots, x_n)$ and $\varepsilon > 0$, observe that
$B_{\overline{\R}^n}(\mathbf{x},\varepsilon)^c \subset \bigcup_{i=1}^n B_{\overline{\R}}(x_i,\varepsilon/n)^c$, so that, using the fact that $ R_t^{(n)}$ is an extension of $\Tilde{P}_t$ and the union bound, 
$$R_t^{(n)}(\mathbf{x},B_{\overline{\R}^n}(\mathbf{x},\varepsilon)^c ) \leq \sum_{i=1}^n   \Tilde{P}_t(x_i,B_{\overline{\R}}(x_i,\varepsilon/n)^c ).$$
Thanks to the Feller property of $(\Tilde{P}_t)_{t\in\R_+}$ and to Proposition \ref{p:fortement-continu} (ii), we deduce that $ R_t^{(n)}(\mathbf{x},B_{\overline{\R}^n}(\mathbf{x},\varepsilon)^c)$ goes to $0$ as $t$ goes to $0$, uniformly over $\mathbf{x} \in  \overline{\R}^n$, which, thanks to Proposition \ref{p:fortement-continu} again, shows that 
\begin{equation}\label{e:cv-forte-0}\lim_{t \to 0 \atop t \in D_+}\sup\limits_{\mathbf{x} \in  \overline{\R}^n} W_1^{(n)}(R_t^{(n)}(\mathbf{x},\cdot), \delta_{\mathbf{x}}(\cdot)) = 0.\end{equation}
Using Lemma \ref{l:contract-beta}, we have that
\begin{equation}\label{e:contracte-tout-t}\sup\limits_{\mathbf{x} \in \overline{\R}^n}  W_1^{(n)}(R_{s+t}^{(n)}(\mathbf{x},\cdot), R_{s}^{(n)}(\mathbf{x},\cdot)) \leq \sup\limits_{\mathbf{x} \in \overline{\R}^n} W_1^{(n)}(R_t^{(n)}(\mathbf{x},\cdot), \delta_{\mathbf{x}}(\cdot)).\end{equation}

Combining \eqref{e:cv-forte-0} and \eqref{e:contracte-tout-t} shows that the map $s \mapsto \mathbf{R}_s^{(n)}$ from $D_+$ to $\mathfrak{C}_n$ is uniformly continuous. Since $\mathfrak{C}_n$ is a complete metric space, and $D_+$ is a dense subset of $\R_+$, there is a unique extension (see \cite{Dix84}) to a uniformly continuous map $s \mapsto \mathbf{R}_s^{(n)}$ from $\R_+$ to $\mathfrak{C}_n$. This allows us to extend the definition $R_t^{(n)} = \hat{\mathbf{R}_t^{(n)}}$ to every $t \in \R_+$. 
As before, $R_t^{(n)}$ is a Markov kernel (thanks to Lemma \ref{l:noyau-cont} and the fact that $\mathbf{R}_t^{(n)}$ is a continuous map from $\overline{\R}^n$ to $\mathcal{P}(\overline{\R}^n)$), and, by continuity of the projection maps $\pi^n_{i_1,\ldots,i_k}$, the Feller property of $(\Tilde{P}_t)_{t \in \R_+}$ and Lemma \ref{l:order-preserving-limit}, for each $t \in \R_+$, the family of Markov kernels $(R_t^{(n)})_{n \geq 2}$ inherits the property of being a family of order-preserving kernels forming a consistent extension of $\Tilde{P}_t$, from the same property already established for each $t \in D_+$.
Since $\Tilde{P}_0(x,\cdot) = \delta_x(\cdot)$ for all $x \in \overline{\R}$ (or using \eqref{e:cv-forte-0}), we note that $R_0^{(n)}(\mathbf{x},\cdot) = \delta_{\mathbf{x}}(\cdot)$ for all $\mathbf{x} \in {\overline{\R}}^d$. 

We now check that the semigroup property holds for $(R^{(n)}_t)_{t \in \R_+}$, i.e. 
\begin{equation}\label{e:semigroupe} \forall s,t \in \R_+,  \  R_{s+t}^{(n)} = R_s^{(n)} R_t^{(n)}.\end{equation} 
Let $(s_k)_{k \geq 1}$ and $(t_k)_{k \geq 1}$ be sequences of elements in $D_+$ which converge to $s$ and $t$ respectively. By \eqref{e:semigroupe-D}, we have that $R_{s_k}^{(n)}  R_{t_k}^{(n)} = R_{s_k+t_k}^{(n)}$. Now, for $f \in \Lip(\overline{\R}^n)$ such that $\n{f}{\sLip} \leq 1$, we write 
$$R_{s_k}^{(n)}  R_{t_k}^{(n)} f  -  R_{s}^{(n)}  R_{t}^{(n)}f=R_{s_k}^{(n)}  R_{t_k}^{(n)} f - R_{s_k}^{(n)}  R_{t}^{(n)} f  + R_{s_k}^{(n)}  R_{t}^{(n)} f  - R_{s}^{(n)}  R_{t}^{(n)} f.$$ 
Using Lemma \ref{l:contract-beta}, we have that,  for all $\mathbf{x} \in \overline{\R}^n$, 
 $$\left|R_{s_k}^{(n)}  R_{t_k}^{(n)} f(\mathbf{x}) -  R_{s_k}^{(n)}  R_t^{(n)} f(\mathbf{x}) \right| \leq  d_{\mathfrak{C}_n}( \mathbf{R}_{t_k}^{(n)} ,  \mathbf{R}_{t}^{(n)}  ).$$
 Arguing exactly as in the proof of \eqref{e:semigroupe-D}, we have that $R_{t}^{(n)} f  \in \CC( \overline{\R}^n)$. Moreover, by continuity of the extension to $\R_+$, we have the weak convergence $ R_{s_k}^{(n)}(\mathbf{x},\cdot)   \xrightarrow[k \to +\infty]{\mathbf{w}} R_s^{(n)}(\mathbf{x},\cdot)$, so that 
$$\lim_{k \to +\infty} R_{s_k}^{(n)} R_t^{(n)} f (\mathbf{x}) =   R_s^{(n)} R_t^{(n)} f (\mathbf{x}),$$
and similarly 
$$\lim_{k \to +\infty} R_{s_k+t_k}^{(n)} f (\mathbf{x}) =   R_{s+t}^{(n)} f (\mathbf{x}),$$
so that $R_{s+t}^{(n)}f(\mathbf{x}) = R_s^{(n)} R_t^{(n)} f(\mathbf{x})$, and \eqref{e:semigroupe} is established.

Finally, thanks to the continuity of $t \mapsto \mathbf{R}_t^{(n)}$ at $t=0$, and by Proposition \ref{p:fortement-continu}, the family of Markov kernels $(R_t^{(n)})_{n \geq 2}$ satisfies property (Fb).

It remains to define the kernels $P_t^{(n)}$ by restricting the previous construction to $\R^n$. For $n \geq 2$, $t \in \R_+$, $\mathbf{x} \in \R^n$ and Borel set $B$ of $\R^n$, we define 
\begin{equation}
\label{e:def-ptn}
P_t^{(n)}(\mathbf{x},B)=R_t^{(n)}(\mathbf{x},B).
\end{equation}
Since $R_t^{(n)}$ is an extension of $\Tilde{P}_t$ and $\Tilde{P}_t(x_i,\R)=1$, we have that $R_t^{(n)}(\mathbf{x},\R^n)=1$, so $P_t^{(n)}$ is indeed a Markov kernel on $\R^n$, and the fact that, for all $t$, $(P_t^{(n)})_{n \geq 2}$ is a consistent extension of $P_t$ by order-preserving Markov kernels is an immediate consequence of the fact that  $(R_t^{(n)})_{n \geq 2}$ is a consistent extension of $\Tilde{P}_t$ by order-preserving Markov kernels. The semigroup property of $(P_t^{(n)})_{t \in \R_+}$ is also an immediate consequence of the same property for $(R_t^{(n)})_{t \in \R_+}$. We now check the Feller property of $(P_t^{(n)})_{t \in \R_+}$. Since a function $f \in \CC_0(\R^n)$ immediately extends to a function $\Tilde{f} \in \CC(\overline{\R}^n)$, we deduce the fact that $P_t^{(n)} f \in \CC_b(\R^n)$ and the convergence $\lim_{t \to 0}\n{P_t^{(n)}f - f}{\infty} = 0$, from the corresponding properties of $R_t^{(n)}$. To complete the proof of the Feller property of $(P_t^{(n)})_{t \in \R_+}$, it remains to prove that $P_t^{(n)} f \in \CC_0(\R^n)$, i.e. $\lim_{\mathbf{x} \to \infty} P_t^{(n)} f(\mathbf{x})=0$, since we already know that $P_t^{(n)} f \in \CC_b(\R^n)$. To this end, using the fact that $P_t^{(n)}$ is an extension of $P_t$, we have that, for all $a > 0$: 
$$P_t^{(n)}(\mathbf{x},[-a,+a]^n) \leq \min_{1 \leq i \leq n} P_t(x_i, [-a,+a]).$$
Thanks to Proposition \ref{p:continue-0} and the Feller property of $P_t$, we deduce that 
$$\lim_{\mathbf{x} \to \infty} P_t^{(n)}(\mathbf{x},[-a,+a]^n) = 0,$$ 
and, thanks to Proposition \ref{p:continue-0} again, that  $\lim_{\mathbf{x} \to \infty} P_t^{(n)} f(\mathbf{x})=0$.
\end{proof}

\section{Proof of Theorem \ref{th:caract-sm}}\label{s:caract-sm}

This (short) section is devoted to the proof of Theorem \ref{th:caract-sm}. We start with a general result showing that the $\sm$ order is preserved by consistent families of order-preserving Markov kernels. Combined with the maximal property of the comonotone coupling with respect to $\sm$ and the fact that $(\breve{P}_t^{(n)})_{t \in \R_+}$ is obtained as a limit of the composition of order-preserving kernels constructed using the comonotone coupling, we deduce Theorem \ref{th:caract-sm}.

\begin{proposition}\label{p:preserve-sm}
If $(K^{(n)})_{n \geq 1}$ is a consistent family of order-preserving Markov kernels on the successive powers of $\R$, then, for all $n \geq 2$ and $f \in \SM_b(\R^n)$, $K^{(n)} f \in \SM_b(\R^n)$.
\end{proposition}

\begin{proof}
Let $\mathbf{x}=(x_1,\ldots,x_n)$, $\mathbf{y}=(y_1,\ldots,y_n)$ and $\mathbf{z}=(\mathbf{x},\mathbf{y})=(x_1,\ldots, x_n,y_1,\ldots, y_n)$. Let $(X_1,\ldots, X_n,Y_1,\ldots,Y_n)$ be a random vector on $\overline{\R}^n$ whose distribution is $K^{(2n)}(\mathbf{z},\cdot)$, and let $\mathbf{X}=(X_1,\ldots,X_n)$ and $\mathbf{Y}=(Y_1,\ldots,Y_n)$. By the consistency property,  the distribution of $\mathbf{X}$ is $K^{(n)}(\mathbf{x},\cdot)$ and  the distribution of $\mathbf{Y}$ is $K^{(n)}(\mathbf{y},\cdot)$, so that $ K^{(n)}f(\mathbf{x}) = \E f(\mathbf{X})$ and $ K^{(n)}f(\mathbf{y}) = \E f(\mathbf{Y})$. Now, for $1 \leq i \leq n$, let $Z_i^+ = Y_i$ and $Z_i^- = X_i$ when $x_i \leq y_i$, $Z_i^+ = X_i$ and $Z_i^- = Y_i$ when $x_i > y_i$, so that, thanks to the consistency property again, the distribution of $(Z_1^+,\ldots, Z_n^+)$ is $K^{(n)}(\mathbf{x} \vee \mathbf{y},\cdot)$ and  the distribution of $(Z_1^-,\ldots, Z_n^-)$ is $K^{(n)}(\mathbf{x} \wedge \mathbf{y},\cdot)$, so that $ K^{(n)}f(\mathbf{x} \vee \mathbf{y}) = \E f(Z_1^+,\ldots, Z_n^+)$ and $ K^{(n)}f(\mathbf{x} \wedge \mathbf{y}) = \E f(Z_1^-,\ldots, Z_n^-)$.
Moreover, by the  order-preserving property of $K^{(2n)}$, we have that $\mathbf{X} \vee \mathbf{Y} = (Z_1^+,\ldots, Z_n^+)$ and $\mathbf{X} \wedge \mathbf{Y} =  (Z_1^-,\ldots, Z_n^-)$. By the super-modularity property of $f$, 
$f(\mathbf{X} \vee \mathbf{Y})+ f(\mathbf{X} \wedge \mathbf{Y}) \geq f(\mathbf{X}) + f(\mathbf{Y})$, and we thus deduce that 
$ f(Z_1^+,\ldots, Z_n^+) +  f(Z_1^-,\ldots, Z_n^-) \geq f(\mathbf{X}) + f(\mathbf{Y})$. Taking expectations, we deduce that 
$K^{(n)}f(\mathbf{x} \vee \mathbf{y})+K^{(n)}f(\mathbf{x} \wedge \mathbf{y}) \geq K^{(n)}f(\mathbf{x}) + K^{(n)}f(\mathbf{y})$, so we have proved that $K^{(n)}f$ is super-modular.
\end{proof}

Remember the family of Markov kernels $(Q_t^{(n)})_{t\in\R_+}$ introduced in Section \ref{ss:discrete-flow}. These are defined as Markov kernels on $\bar\R^n$, but, for $\mathbf x\in\R^n$, $Q^{(n)}_t(\mathbf x, \R^n)=1$, so, letting $T_t^{(n)}(\mathbf x, B)=Q^{(n)}_t(\mathbf x, B)$ for every Borel set $B$ of $\R^n$, we obtain a family of Markov kernels $(T_t^{(n)})_{t\in\R_+}$ on $\R^n$. 

\begin{proposition}\label{p:comon-step}
 Let $(M_t^{(n)})_{t\in\R_+} \in \mathfrak{M}_{n}$. Then, for all $t \geq 0$ and $\mathbf{x} \in \R^n$, we have that $M_t^{(n)}(\mathbf{x},\cdot) \sm T_t^{(n)}(\mathbf{x},\cdot)$. 
\end{proposition}

\begin{proof}
Let $\mathbf{x}=(x_1,\ldots,x_n)$. By construction (see Theorem 3.1.1 in \cite{MulSto02}), $T_t^{(n)}(\mathbf{x},\cdot)$ is a comonotone probability distribution on $\R^n$. Moreover, $T_t^{(n)}(\mathbf{x},\cdot)$ and $M_t^{(n)}(\mathbf{x},\cdot)$ have the same one-dimensional marginals, namely the distributions $P_t(x_1,\cdot),\ldots,P_t(x_n,\cdot)$. By Theorem 3.9.8 in \cite{MulSto02} (using property (P5)), we deduce that $M_t^{(n)}(\mathbf{x},\cdot) \sm T_t^{(n)}(\mathbf{x},\cdot)$.
\end{proof}

\begin{proof}[Proof of Theorem \ref{th:caract-sm}]
\nopagebreak
We now prove by induction that, for all $f \in \SM_b(\R^n)$, $s \geq 0$, and integer $k \geq 0$, \begin{equation}\label{e:compare-progressive}M_{sk}^{(n)}f \leq \left[T_s^{(n)} \right]^k f.\end{equation} For $k=0$, this is obvious since the property to be proved is that $f \leq f$.
Now write $\left[T_s^{(n)} \right]^{k+1} f = T_s^{(n)} \left[T_s^{(n)} \right]^k f$. Thanks to Proposition \ref{p:preserve-sm}, $\left[T_s^{(n)} \right]^k f$ is super-modular. We deduce from Proposition \ref{p:comon-step} that 
$M_s^{(n)} \left[T_s^{(n)} \right]^k f \leq T_s^{(n)} \left[T_s^{(n)} \right]^k f =  \left[T_s^{(n)} \right]^{k+1}f$. Assuming that  $M_{sk}^{(n)}f \leq \left[T_s^{(n)} \right]^k f$, positivity implies that $M_{s(k+1)}^{(n)}f = M_s^{(n)}  M_{sk}^{(n)}f \leq  M_s^{(n)} \left[T_s^{(n)} \right]^k f $,  so that, combining the previous inequalities, we deduce that $M_{s(k+1)}^{(n)}f \leq \left[T_s^{(n)} \right]^{k+1}f$, and \eqref{e:compare-progressive} is proved by induction. 

Assume that $f$ is continuous in addition to being in $\SM_b(\R^n)$.
Given $t \in D_+$, remember from Section \ref{ss:construction} that, for $m \geq m_0$, $Q^{(n),m}_t = \left[Q^{(n)}_{2^{-m}}\right]^{k2^{m-m_0}},$ where $t=k2^{-m_0}\in D_+$, and $k \geq 1$ and $m_0 \geq 0$ are integers.
As a consequence, if we write $T^{(n),m}_t=\left[T^{(n)}_{2^{-m}}\right]^{k2^{m-m_0}}$, then \eqref{e:compare-progressive} implies that 
\begin{equation}
\label{e:compare-progressive-dyadic}
M_t^{(n)}f \leq T^{(n),m}_t f
\end{equation} 
Remember that $R_t^{(n)}$ is defined in Section \ref{ss:construction} as the limit for $k \to +\infty$ of a subsequence $\left(Q^{(n),m_k}_t\right)_{k \geq 1}$. For all $\mathbf x\in\R^n$, we have $Q^{(n),m_k}_t(\mathbf x,\R^n)=1$ for every $k \geq 1$, and $R_t^{(n)}(\mathbf x,\R^n)=1$. Since $T^{(n),m_k}_t$ and $\breve{P}^{(n)}_t$ are respectively defined by restricting $Q^{(n),m_k}_t$ and $R_t^{(n)}$ to $\R^n$, we have that\footnote{The argument is the same as in the proof of Lemma \ref{l:cv-bar}.} $\breve{P}^{(n)}_t(\mathbf x,\cdot)$ is the weak limit of the sequence $\left(T^{(n),m_k}_t(\mathbf x,\cdot)\right)_{k\geq 1}$. Taking the limit in \eqref{e:compare-progressive-dyadic}, we thus have that $M^{(n)}_tf\leq \breve{P}_t^{(n)}f$. Given $t \in \R_+$, consider a sequence  $(s_m)_{m \geq 1}$ of elements of $D_+$ such that $\lim_{m \to +\infty} s_m = t$.
 By the Feller property of $(M_s^{(n)})_{s \in \R_+}$ and $(\breve{P}_s^{(n)})_{s \in \R_+}$, we have that, as $m \to +\infty$, $M_{s_m}^{(n)}f$ converges pointwise to $M_t^{(n)} f$, and  $\breve{P}_{s_m}^{(n)}f$ converges pointwise to $\breve{P}_t^{(n)} f$, so that, 
taking the limit in the inequality $M_{s_m}^{(n)}f \leq \breve{P}_{s_m}^{(n)} f$, we have that $M_t^{(n)}f \leq \breve{P}_t^{(n)} f$. Since we know that, for all $\mathbf{x}=(x_1,\ldots,x_n) \in \R^n$,  $M_t^{(n)}(\mathbf{x},\cdot)$ and $\breve{P}_t^{(n)}(\mathbf{x},\cdot)$ have the same one-dimensional marginals (namely $P_t(x_1,\cdot),\ldots,P_t(x_n,\cdot)$), we can invoke Theorems 3.9.10 and 3.9.11 in \cite{MulSto02} to deduce the conclusion of Theorem \ref{th:caract-sm}.
\end{proof}

\begin{remark}
Using the same arguments than in the proof of Theorem \ref{th:caract-sm}, we can prove that, for all $\mathbf{x} \in \R^n$ and $t \in D_+$, one has the comparison $\left[T^{(n)}_{t/2}\right]^2(\mathbf x,\cdot)\sm T^{(n)}_{t}(\mathbf x, \cdot)$. In turn, this implies that the sequence of probability measures $\left(T^{(n),m}_t(\mathbf{x},\cdot) \right)_{m \geq m_0}$ is non-increasing with respect to $\sm$. On the other hand, we have the weak convergence $T^{(n),m_k}_t(\mathbf{x},\cdot) \xrightarrow[k \to +\infty]{\mathbf{w}} \breve{P}_t^{(n)}(\mathbf{x},\cdot)$. Thus, for any continuous function $f \in \SM_b(\R^n)$, we have that at the same time that the sequence $\left(T^{(n),m}_t f(\mathbf{x})\right)_{m \geq m_0}$ is non-increasing and that $\lim_{k \to +\infty} T^{(n),m_k}_t f(\mathbf{x}) = \breve{P}_t^{(n)}f(\mathbf{x})$, so we have the convergence $\lim_{m \to +\infty} T^{(n),m}_t f(\mathbf{x}) = \breve{P}_t^{(n)}f(\mathbf{x})$, without the need to consider convergence along a subsequence. We deduce\footnote{E.g. using the fact that the sequence $\left(T^{(n),m}_t(\mathbf{x},\cdot) \right)_{m \geq m_0}$ is tight since its one-dimensional marginals are fixed, and that, from Theorems 3.9.10 and 3.9.11 in \cite{MulSto02}, the bounded continuous super-modular functions separate Borel probability measures on $\R^d$.} that the sequence $\left(T^{(n),m}_t(\mathbf{x},\cdot) \right)_{m \geq m_0}$ converges weakly to $\breve{P}_t^{(n)}(\mathbf{x},\cdot)$. This points to a possible alternative approach to (at least some parts of) Theorem \ref{th:extension}, exploiting monotonicity with respect to the $\sm$ order,  which we have not tried to push further. Such an approach would be reminiscent of the one used in a different context in \cite{BouJui22}.
\end{remark}

\providecommand{\bysame}{\leavevmode\hbox to3em{\hrulefill}\thinspace}
\providecommand{\MR}{\relax\ifhmode\unskip\space\fi MR }
\providecommand{\MRhref}[2]{%
  \href{http://www.ams.org/mathscinet-getitem?mr=#1}{#2}
}
\providecommand{\href}[2]{#2}

The authors would like to thank the ANR SOCOT, Ecole des Mines de Nancy and ENS Rennes for their support.

\begin{appendix}

\section{About Theorems 4 and 5 in \cite{Kam+77}}
\label{s:appendix-KamKreOBr}

In this section, we explain why we think an element is missing in the proof of Theorem 4  in \cite{Kam+77}, and hence of Theorem 5  in \cite{Kam+77}, which is a corollary. 

\subsection{About Theorem 4}

Theorem 4  in \cite{Kam+77} considers a partially ordered Polish space $(E, \preccurlyeq)$, two families of $E$-valued random variables $(X_t)_{t \in \R_+}$ and $(Y_t)_{t \in \R_+}$ such that $t \mapsto X_t$ and $t \mapsto Y_t$ are (random) càdlàg paths from $\R_+$ to $E$, and, assumes that, for all $n \geq 2$,  $x_1,\ldots, x_{n-1} \in \R$, $y_1,\ldots, y_{n-1} \in \R$ such that   $x_i \preccurlyeq y_i$ for $i=1,\ldots,n-1$, and {\bf ordered time indices} $0 \leq t_1 < \cdots < t_n \in \R$, one has \begin{equation}\label{e:stoch-dom-KKO}\P(X_{t_n} \in \cdot |X_{t_1}=x_1,\cdots,X_{t_{n-1}}=x_{n-1}) \st 
\P(Y_{t_n} \in \cdot |Y_{t_1}=y_1,\cdots,Y_{t_{n-1}}=y_{n-1}),\end{equation} where $\st$ denotes stochastic domination between probability measures on $(E, \preccurlyeq)$ (see \cite{Kam+77}).

Assuming moreover that $\P(X_0 \in \cdot) \st \P(Y_0 \in \cdot)$, the conclusion of Theorem 4 is that one can define a family of pairs of $E$-valued random variables $(\tilde{X}_{t},\tilde{Y}_{t})_{t\in\R_+}$ on the same probability space, in such a way that $t \mapsto \tilde{X}_t$ and $t \mapsto \tilde{Y}_t$ are (random) càdlàg paths from $\R_+$ to $E$, that almost surely for all $t$,  $\tilde{X}_{t} \preccurlyeq \tilde{Y}_{t}$, that $(\tilde{X}_{t})_{t\in\R_+}$ has the same (joint) distribution as $(X_{t})_{t\in\R_+}$, and that $(\tilde{Y}_{t})_{t\in\R_+}$ has the same (joint) distribution as $(Y_{t})_{t\in\R_+}$.

We now consider the proof of Theorem 4. Given an {\bf increasing sequence} $0 = t_1 < t_2 < \cdots$, one can use \eqref{e:stoch-dom-KKO} and Strassen's theorem to define inductively a sequence of pairs of $E$-valued random variables $(\tilde{X}_{t_n},\tilde{Y}_{t_n})_{n \geq 1}$ in such a way that, almost surely, for all $n \geq 1$, $\tilde{X}_{t_n} \preccurlyeq \tilde{Y}_{t_n}$, and such that $(\tilde{X}_{t_1}, \ldots, \tilde{X}_{t_n})$ has the same distribution as $(X_{t_1},\ldots, X_{t_n})$, and $(\tilde{Y}_{t_1},\ldots, \tilde{Y}_{t_n})$ has the same distribution as $(Y_{t_1},\ldots, Y_{t_n})$. (This mirrors the argument used to prove Theorem 2 in \cite{Kam+77}.) 

The problem is the claim (without an explanation), in the proof of Theorem 4, that this approach can be used to produce a sequence of random variables $(\tilde{X}_{t_n},\tilde{Y}_{t_n})_{n \geq 1}$ with the above properties, where $(t_n)_{n \geq 1}$ is an enumeration of a countable dense subset of $\R_+$, since clearly such an enumeration {\bf cannot be done using an increasing sequence}. Thus, to make the proof of Theorem 4 work, it seems that the following strengthening of Assumption \eqref{e:stoch-dom-KKO} would be needed, allowing for unordered sequences of time indices: 
 for every $n \geq 2$, $x_1,\ldots, x_{n-1} \in \R$, $y_1,\ldots, y_{n-1} \in \R$ such that   $x_i \preccurlyeq y_i$ for $i=1,\ldots,n-1$, and {\bf pairwise distinct time indices} $t_1, \ldots, t_n \in \R_+$, one has 
  \begin{equation}\label{e:stoch-dom-KKO-bis}\P(X_{t_n} \in \cdot |X_{t_1}=x_1,\cdots,X_{t_{n-1}}=x_{n-1}) \st 
\P(Y_{t_n} \in \cdot |Y_{t_1}=y_1,\cdots,Y_{t_{n-1}}=y_{n-1}).\end{equation}
  
 \subsection{About Theorem 5}

Theorem 5  in \cite{Kam+77} assumes that $(X_t)_{t \in \R_+}$ and $(Y_t)_{t \in \R_+}$ are (possibly inhomogeneous in time) Markov processes, and does not directly assume \eqref{e:stoch-dom-KKO}, but instead the following stochastic monotonicity condition: for all $s,t \geq 0$, and all $x,y \in E$ such that $x \preccurlyeq y$,
  \begin{equation}\label{e:stoch-dom-KKO-ter} \P(X_{t+s} \in \cdot |X_{t}=x) \st  \P(Y_{t+s} \in \cdot |Y_{t}=y),\end{equation}
  with the same conclusion as Theorem 4. The proof of Theorem 5 consists in observing that, using the Markov property, \eqref{e:stoch-dom-KKO-ter} implies \eqref{e:stoch-dom-KKO}, then invoking Theorem 4. However, in view of our previous remark, this approach does not seem to lead to a complete proof of Theorem 5 unless one can prove that 
    \eqref{e:stoch-dom-KKO-ter} implies \eqref{e:stoch-dom-KKO-bis}. Unfortunately, such an implication is not true in general when $(t_k)_{1\leq k\leq n}$ is not increasing, as we show in the following counterexample.

   Take $n=3, t_1=0,t_2=2,t_3=1$ and $E = \{a,b,c\}$, where $a \preccurlyeq b \preccurlyeq c$, and where $X$ and $Y$ are two versions of the same continuous-time Markov chain on $E$ with distinct starting points. The infinitesimal generator of the chain is prescribed by the $Q$-matrix (see e.g. \cite{And91}) $$
L = (L_{xy})_{x \in E, y \in E} = \begin{blockarray}{cccc}
 & a & b & c \\
\begin{block}{c(ccc)}
  a & -2.5 & 1.75 & 0.75 \\
  b & 1.5 & -2.5 & 1  \\
  c & 0.5 & 0 & -0.5 \\
\end{block}
\end{blockarray}
 $$
and defines a Markov semigroup on $E$, given, for all $t \geq 0$, by the transition matrix $P_t = \exp(tL)$. To check that $L$ indeed defines a stochastically monotone Markov semigroup on $E$, we check the condition stated in \cite{And91} (Theorem 3.4  page 249, attributed to \cite{Kir76}), which in our setting reduces to the two conditions $L_{ac} \leq L_{bc}$ and $L_{ba} \geq L_{ca}$, both visibly satisfied. We now check numerically that $$\P(X_1 \in \cdot | X_0 = a, X_2 = a) \st\P(Y_1 \in \cdot | Y_0 = b, Y_2 = b).$$ Denoting $P=\exp(L)$, we have that 
 $$\P(X_1 \preccurlyeq a | X_0 = a, X_2 = a) = \P(X_1 = a | X_0 = a, X_2 = a) = \frac{P_{aa} P_{aa}}{P_{aa}P_{aa} + P_{ab}P_{ba} + P_{ac}P_{ca}},$$
 and 
 $$\P(Y_1 \preccurlyeq a | Y_0 = b, Y_2 = b) = \P(Y_1 = a | Y_0 = b, Y_2 = b) = \frac{P_{ba} P_{ab}}{P_{ba}P_{ab} + P_{bb}P_{bb} + P_{bc}P_{cb}}.$$
 Numerically computing $P$ with the SciPy open-source software\footnote{Specifically, we used {\tt scipy.linalg.expm} to compute the matrix exponential.}, we get, rounding to $3$ decimal places, that $\P(X_1 \preccurlyeq a | X_0 = a, X_2 = a)  \approx 0.362$ and $\P(Y_1 \preccurlyeq a | Y_0 = b, Y_2 = b)  \approx 0.374$, while the stochastic domination $\P(X_1 \in \cdot | X_0 = a, X_2 = a) \st \P(Y_1 \in \cdot | Y_0 = b, Y_2 = b)$ would imply the inequality $ \P(Y_1 \preccurlyeq a | Y_0 = b, Y_2 = b) \leq \P(X_1 \preccurlyeq a | X_0 = a, X_2 = a)$.

To sum up, an element seems to be missing in the proof of Theorem 5. We observe that, in the specific case where $E = \R$ and where $(X_t)_{t\in\R_+}$ and $(Y_t)_{t\in\R_+}$ are two versions of the same Feller process with distinct starting points, Theorem \ref{th:extension} in the present paper can be used to deduce the conclusion of Theorem 5.

\section{Miscellaneous lemmas}
\label{s:appendix-easy}

\begin{lemma}\label{l:compose-order}
If $K^{(n)}$ and $L^{(n)}$ are order-preserving kernels on $\R^n$, so is their composition $K^{(n)}L^{(n)}$.
\end{lemma}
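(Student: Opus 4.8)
The plan is to exploit the transitivity of order-compatibility together with the integral formula for the composition of kernels. First I would record the elementary observation that order-compatibility is transitive: if $\mathbf{y}$ is order-compatible with $\mathbf{x}$ and $\mathbf{z}$ is order-compatible with $\mathbf{y}$, then whenever $x_i \leq x_j$ one gets $y_i \leq y_j$ and hence $z_i \leq z_j$, so $\mathbf{z}$ is order-compatible with $\mathbf{x}$. Equivalently, $\mathbf{y} \in \R^n_{\mathbf{x}}$ implies $\R^n_{\mathbf{y}} \subseteq \R^n_{\mathbf{x}}$.

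Next, since $K^{(n)}$ and $L^{(n)}$ are Markov kernels, so is $K^{(n)}L^{(n)}$ (composition of Markov kernels, as recalled in Subsection \ref{ss:Markov-kernels} and Subsection \ref{ss:stochastic-orders}), and for any $\mathbf{x} \in \R^n$ we may write
$$(K^{(n)}L^{(n)})(\mathbf{x},\R^n_{\mathbf{x}}) = \int_{\R^n} L^{(n)}(\mathbf{y},\R^n_{\mathbf{x}})\, dK^{(n)}(\mathbf{x},\mathbf{y}),$$
the integrand being measurable in $\mathbf{y}$ by definition of a Markov kernel, and $\R^n_{\mathbf{x}}$ being Borel (indeed it is closed). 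Because $K^{(n)}$ is order-preserving, $K^{(n)}(\mathbf{x},\cdot)$ is carried by $\R^n_{\mathbf{x}}$, so it suffices to control the integrand for $\mathbf{y} \in \R^n_{\mathbf{x}}$. For such $\mathbf{y}$, the inclusion $\R^n_{\mathbf{y}} \subseteq \R^n_{\mathbf{x}}$ gives $L^{(n)}(\mathbf{y},\R^n_{\mathbf{x}}) \geq L^{(n)}(\mathbf{y},\R^n_{\mathbf{y}}) = 1$, the last equality being the order-preserving property of $L^{(n)}$. Hence the integrand equals $1$ for $K^{(n)}(\mathbf{x},\cdot)$-almost every $\mathbf{y}$, so the integral equals $1$, which is exactly the order-preserving property of $K^{(n)}L^{(n)}$.

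I do not expect a real obstacle here: the only points deserving a word are that $\R^n_{\mathbf{x}}$ is a Borel set, so that the displayed identity and its integrand make sense, and that the integral may be restricted to the full-measure set $\R^n_{\mathbf{x}}$ — both entirely routine. The conceptual content is confined to the transitivity remark of the first paragraph.
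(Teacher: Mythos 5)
Your proof is correct and follows essentially the same route as the paper's: the integral formula for the composed kernel, restriction to the full-measure set $\R^n_{\mathbf{x}}$, and the transitivity observation $\mathbf{y} \in \R^n_{\mathbf{x}} \Rightarrow \R^n_{\mathbf{y}} \subseteq \R^n_{\mathbf{x}}$ combined with the order-preserving property of $L^{(n)}$. The added remark that $\R^n_{\mathbf{x}}$ is closed, hence Borel, is a harmless refinement the paper leaves implicit.
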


\begin{proof}
By definition, for all $\mathbf{x} \in \R^n$, $K^{(n)}L^{(n)}(\mathbf{x}, \R^n_{\mathbf{x}}) = \int_{\R^n} L^{(n)}(\mathbf{y}, \R^n_{\mathbf{x}}) dK^{(n)}(\mathbf{x},\mathbf{y})$.
Since $K^{(n)}$ is order-preserving, we have that  $K^{(n)}(\mathbf{x}, \R^n_{\mathbf{x}})=1$, so the previous integral can be rewritten as  $\int_{\R^n_{\mathbf{x}}} L^{(n)}(\mathbf{y}, \R^n_{\mathbf{x}}) dK^{(n)}(\mathbf{x},\mathbf{y})$.
For all $\mathbf{y} \in \R^n_{\mathbf{x}}$, the definition shows that\footnote{If $\mathbf{x}=(x_1,\ldots,x_n)$,  $\mathbf{y}=(y_1,\ldots,y_n)$, $\mathbf{z}=(z_1,\ldots,z_n)$ are such that $\mathbf{y} \in \R^n_{\mathbf{x}}$ and $\mathbf{z} \in \R^n_{\mathbf{y}}$, we have that, for all $1 \leq i,j \leq n$, $x_i \leq x_j  \Rightarrow y_i \leq y_j$ (since $\mathbf{y} \in \R^n_{\mathbf{x}}$), hence $x_i \leq x_j  \Rightarrow z_i \leq z_j$ (since $\mathbf{z} \in \R^n_{\mathbf{y}}$).} $\R^n_{\mathbf{y}} \subset \R^n_{\mathbf{x}}$, so that, since $L^{(n)}$ is order-preserving, we have $L^{(n)}(\mathbf{y}, \R^n_{\mathbf{x}})=1$, and we have proved that $K^{(n)}L^{(n)}(\mathbf{x}, \R^n_{\mathbf{x}}) =1$.
\end{proof}

\begin{lemma}\label{l:compose-consistent-ext}
If $(K^{(n)})_{n \geq 2}$ and $(L^{(n)})_{n \geq 2}$ are consistent extensions, respectively of $K$ and $L$, then $(K^{(n)} L^{(n)})_{n \geq 2}$ is a consistent extension of $KL$.
\end{lemma}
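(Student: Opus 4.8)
The plan is to unwind the composition $K^{(n)}L^{(n)}$ as an integral, use the consistency of $(L^{(n)})_{n\geq 2}$ to rewrite the inner kernel evaluated on a cylinder set, and then feed the resulting bounded measurable integrand into the consistency of $(K^{(n)})_{n\geq 2}$, in the form of a change of variables for image measures. Throughout I set $K^{(1)}=K$ and $L^{(1)}=L$, so that $K^{(1)}L^{(1)}=KL$, and it suffices to check that the family $(K^{(n)}L^{(n)})_{n\geq 1}$ satisfies \eqref{e:consistent}; that each $K^{(n)}L^{(n)}$ is a Markov kernel is clear since the composition of Markov kernels is a Markov kernel.

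First I would record the following reformulation of consistency: if $(J^{(n)})_{n\geq 1}$ is a consistent family and $\pi=\pi^n_{i_1,\ldots,i_k}$, then, for every $\mathbf{x}\in S^n$, the image of the probability measure $J^{(n)}(\mathbf{x},\cdot)$ under $\pi$ is exactly $J^{(k)}(\pi(\mathbf{x}),\cdot)$ — this is precisely \eqref{e:consistent} read for an arbitrary measurable set $B\subseteq S^k$. By the transfer theorem for image measures (equivalently, check it for $\varphi=\un_B$ and extend by linearity and monotone convergence), this yields, for every bounded measurable $\varphi\colon S^k\to\R$,
$$\int_{S^n}\varphi(\pi(\mathbf{y}))\,dJ^{(n)}(\mathbf{x},\mathbf{y})=\int_{S^k}\varphi(\mathbf{z})\,dJ^{(k)}(\pi(\mathbf{x}),\mathbf{z}).$$
Next, fixing $1\leq k\leq n$, indices $i_1,\ldots,i_k\in\ent{1}{n}$, a point $\mathbf{x}\in S^n$ and a measurable $B\subseteq S^k$, I would set $\varphi(\mathbf{z})=L^{(k)}(\mathbf{z},B)$, which is bounded and measurable because $L^{(k)}$ is a Markov kernel. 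Consistency of $(L^{(n)})_{n\geq 1}$ gives $L^{(n)}(\mathbf{y},\pi^{-1}(B))=L^{(k)}(\pi(\mathbf{y}),B)=\varphi(\pi(\mathbf{y}))$, hence
$$(K^{(n)}L^{(n)})(\mathbf{x},\pi^{-1}(B))=\int_{S^n}L^{(n)}(\mathbf{y},\pi^{-1}(B))\,dK^{(n)}(\mathbf{x},\mathbf{y})=\int_{S^n}\varphi(\pi(\mathbf{y}))\,dK^{(n)}(\mathbf{x},\mathbf{y}).$$
Applying the displayed identity with $J=K$ rewrites the last integral as $\int_{S^k}L^{(k)}(\mathbf{z},B)\,dK^{(k)}(\pi(\mathbf{x}),\mathbf{z})=(K^{(k)}L^{(k)})(\pi(\mathbf{x}),B)$, which is exactly \eqref{e:consistent} for $(K^{(n)}L^{(n)})_{n\geq 1}$, and completes the argument (the marginal case $k=1$, $\pi=\pi^n_i$, being covered by the same computation).

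I do not expect a genuine obstacle here. The only subtlety worth flagging is that one cannot invoke the defining relation of consistency for $L$ \emph{inside} the $dK^{(n)}$-integral directly: one must first identify the integrand $\mathbf{y}\mapsto L^{(n)}(\mathbf{y},\pi^{-1}(B))$ with $\varphi\circ\pi$ for a \emph{fixed} bounded measurable $\varphi$ on $S^k$, and only then apply the image-measure form of consistency for $K$. In other words, the proof is nothing more than the careful bookkeeping of composing the two consistency relations through a change of variables.
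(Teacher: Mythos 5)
Your proof is correct and follows essentially the same route as the paper's: both unwind $K^{(n)}L^{(n)}$ on a cylinder set, use the consistency of $(L^{(n)})$ to rewrite the integrand as $L^{(k)}(\pi(\mathbf{y}),B)$, and then apply the consistency of $(K^{(n)})$ in its image-measure (change-of-variables) form to land on $(K^{(k)}L^{(k)})(\pi(\mathbf{x}),B)$. Your explicit remark that one must first identify the integrand with $\varphi\circ\pi$ for a fixed measurable $\varphi$ before invoking the push-forward identity is exactly the bookkeeping the paper carries out via the measure $(\pi^n_{i_1,\ldots,i_k})_{*}K^{(n)}(\mathbf{x},\cdot)$.
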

\begin{proof}
Let $K^{(1)}=K$ and $L^{(1)}=L$. Given integers $1 \leq k \leq n$,  $i_1,\ldots, i_k \in \ent{1}{n}$, $\mathbf{x} \in S^n$ and measurable subset $B$ of $S^k$, we have by definition 
$$K^{(n)} L^{(n)}(\mathbf{x}, (\pi^n_{i_1,\ldots,i_k})^{-1}(B))=  \int_{S^n} L^{(n)}(\mathbf{y}, (\pi^n_{i_1,\ldots,i_k})^{-1}(B)) dK^{(n)}(\mathbf{x},\mathbf{y}).$$
Since $(L^{(n)})_{n \geq 2}$ is a consistent extension of $L$, we have that 
$$ L^{(n)}(\mathbf{y}, (\pi^n_{i_1,\ldots,i_k})^{-1}(B)) =  L^{(k)}(\pi^n_{i_1,\ldots,i_k}(\mathbf{y}), B),$$ 
so that 
\begin{equation}\label{e:integrale-mesure}K^{(n)} L^{(n)}(\mathbf{x}, (\pi^n_{i_1,\ldots,i_k})^{-1}(B))=  \int_{S^n}  L^{(k)}(\pi^n_{i_1,\ldots,i_k}(\mathbf{y}), B) dK^{(n)}(\mathbf{x},\mathbf{y}).\end{equation}
Denoting by $(\pi^n_{i_1,\ldots,i_k})_{*}K^{(n)}(\mathbf{x},\cdot)$ the image probability measure on $S^k$ defined by $$\left[(\pi^n_{i_1,\ldots,i_k})_{*}K^{(n)}(\mathbf{x},\cdot)\right](A) = K^{(n)}(\mathbf{x}, (\pi^n_{i_1,\ldots,i_k})^{-1}(A),$$
\eqref{e:integrale-mesure} rewrites as  
$$K^{(n)} L^{(n)}(\mathbf{x}, (\pi^n_{i_1,\ldots,i_k})^{-1}(B))=  \int_{S^k}  L^{(k)}(\mathbf{z}, B) d\left[(\pi^n_{i_1,\ldots,i_k})_{*}K^{(n)}(\mathbf{x},\cdot)\right](\mathbf{z}).$$
On the other hand, since $(K^{(n)})_{n \geq 2}$ is a consistent extension of $K$, we have that  
 $$\left[(\pi^n_{i_1,\ldots,i_k})_{*}K^{(n)}(\mathbf{x},\cdot)\right](A) = K^{(k)}(\pi^n_{i_1,\ldots,i_k}(\mathbf{x}),A),$$
 so that 
 $$K^{(n)} L^{(n)}(\mathbf{x}, (\pi^n_{i_1,\ldots,i_k})^{-1}(B))=  \int_{S^k}  L^{(k)}(\mathbf{z}, B) d K^{(k)}(\pi^n_{i_1,\ldots,i_k}(\mathbf{x}),\mathbf{z}),$$
and thus 
$$K^{(n)} L^{(n)}(\mathbf{x}, (\pi^n_{i_1,\ldots,i_k})^{-1}(B)) = (K^{(k)}   L^{(k)}) (\pi^n_{i_1,\ldots,i_k}(\mathbf{x}), B),$$
which proves the conclusion of the Lemma.
\end{proof}

\begin{lemma}\label{l:order-preserving-limit}
If $K$ is a Markov kernel on $\overline{\R}^n$ and $K_1,K_2,\ldots$ is a sequence of order-preserving Markov kernels on $\overline{\R}^n$ such that, for all $\mathbf{x} \in \overline{\R}^n$, 
$K_k(\mathbf{x},\cdot)  \xrightarrow[k \to +\infty]{\mathbf{w}} K(\mathbf{x},\cdot)$, 
then $K$ is order-preserving.
\end{lemma}
\begin{proof}
For any $\mathbf{x} \in \overline{\R}^n$, the set $\overline{\R}^n_{\mathbf{x}} = \{ \mathbf{y} \in \overline{\R}^n \mbox{ such that $\mathbf{y}$ is order-compatible with $\mathbf{x}$} \}$ is a closed subset of $\overline{\R}^n$. As a consequence, by the assumed weak convergence, we have that, for all $\mathbf{x} \in  \overline{\R}^n$, 
$\limsup_{k \to +\infty} K_k(\mathbf{x},\overline{\R}^n_{\mathbf{x}}) \leq K(\mathbf{x},\overline{\R}^n_{\mathbf{x}})$. Since $K_k$ is order-preserving for all $k$, we have that $ K_k(\mathbf{x},\overline{\R}^n_{\mathbf{x}})=1$, and we deduce that $K(\mathbf{x},\overline{\R}^n_{\mathbf{x}}) = 1$.
\end{proof}

\section{Numerical illustrations}
\label{s:appendix-simulations}

In this section, we show some numerical simulations to illustrate our construction, when the underlying process is a square-root diffusion as defined by the following s.d.e.\footnote{This class of processes has been studied by Feller \cite{Fel51}, and gained popularity in financial mathematics as a model for the dynamics of interest rates \cite{Cox+85}. }
\begin{equation}\label{e:sde}d X_t = a(b-X_t) dt + \sigma \sqrt{X_t} dW_t,\end{equation}
where $(W_t)_{t\in\R_+}$ is a standard one-dimensional Brownian motion, and where $a>0$, $b>0$, and $\sigma > 0$. This defines an explicit Feller semigroup on $[0,+\infty[$: for $t> 0$, $P_t(x,\cdot)$ is the distribution of  
$Z/(2*c)$, where $c = \frac{2a}{(1-e^{-at})\sigma^2},$ and $Z$ follows a non-central chi-square distribution with $\frac{4ab}{\sigma^2}$ degrees of freedom and non-centrality parameter $2 c x e^{-at}$. 

This example does not fit exactly into the framework of the present paper, which is devoted to Feller processes on the full real line $\R$. However, our construction of the comonotone flow would work as well on $[0,+\infty)$ instead of $\R$, and we stick to this example as it is likely the simplest and best-known example of an explicitly solvable s.d.e. with non-constant diffusion coefficient.

We work with the set of parameters $a=3$, $b=2$ and $\sigma^2 = 8$. The following figures depict the values of $N=5000$ simulated i.i.d. pairs distributed according to $Q^{(2),m}_{t}((x_1,x_2),\cdot)$, for $x_1=0.5$, $x_2=2$, $t=0.5$, and $m=1,\ldots,6$. Figure \ref{fig:comonotone-xy} displays each pair $(x,y)$ as a point with coordinates $(x,y)$, while Figure \ref{fig:comonotone-x-y-x} uses the point with coordinates $(x,y-x)$ instead. Simulations were done using the R open-source software. 

For $m=1$, $Q^{(2),m}_{t}((x_1,x_2),\cdot)$ is just the classical comonotone coupling, which is the reason why the points are lying on a curve. As $m$ gets larger, the array of points gets more spread out, with barely distinguishable differences between the few last consecutive graphs.

\begin{figure}
\centering
 \includegraphics[scale=0.75]{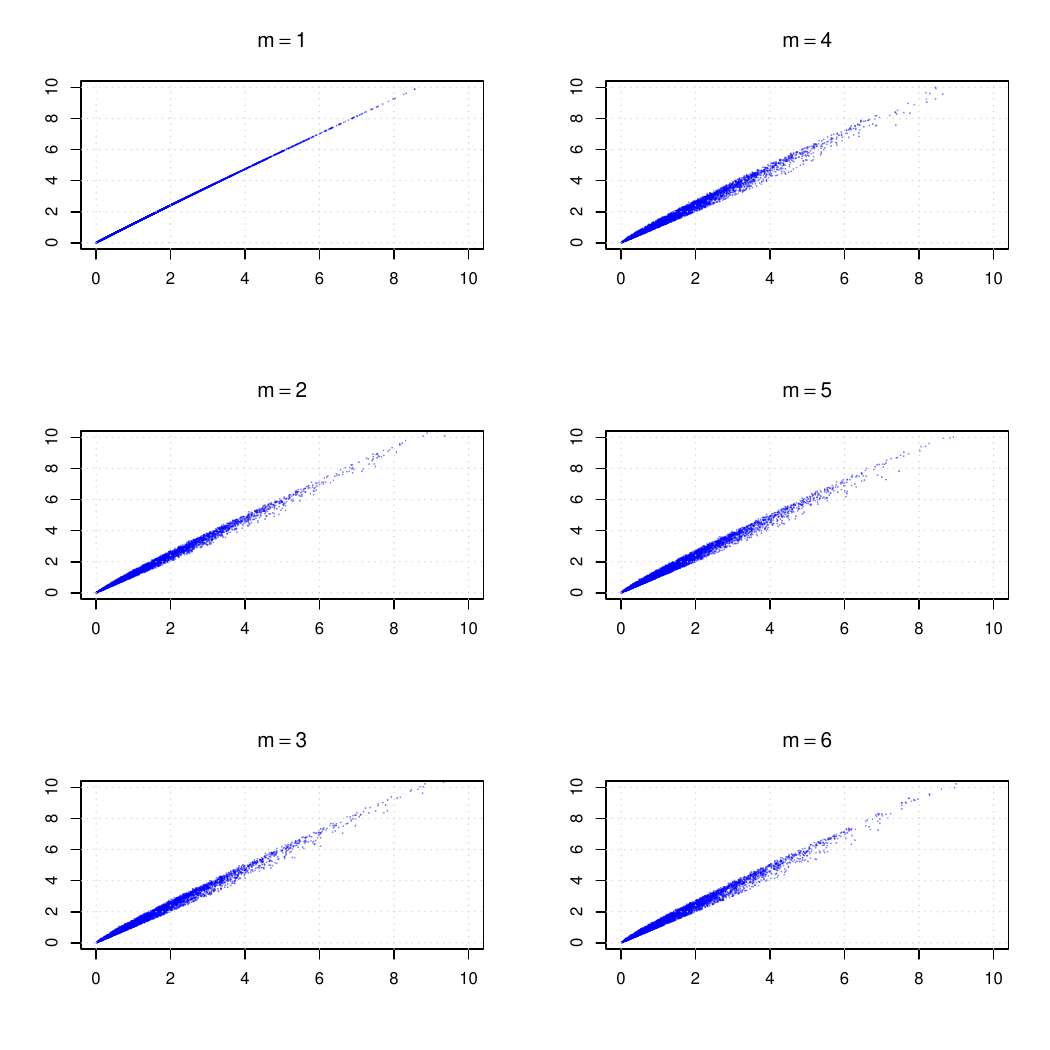}
\caption{5000 simulated pairs following the distribution $Q^{(2),m}_{t}((x_1,x_2),\cdot)$.}
 \label{fig:comonotone-xy}
\end{figure}

\begin{figure}
\centering
 \includegraphics[scale=0.75]{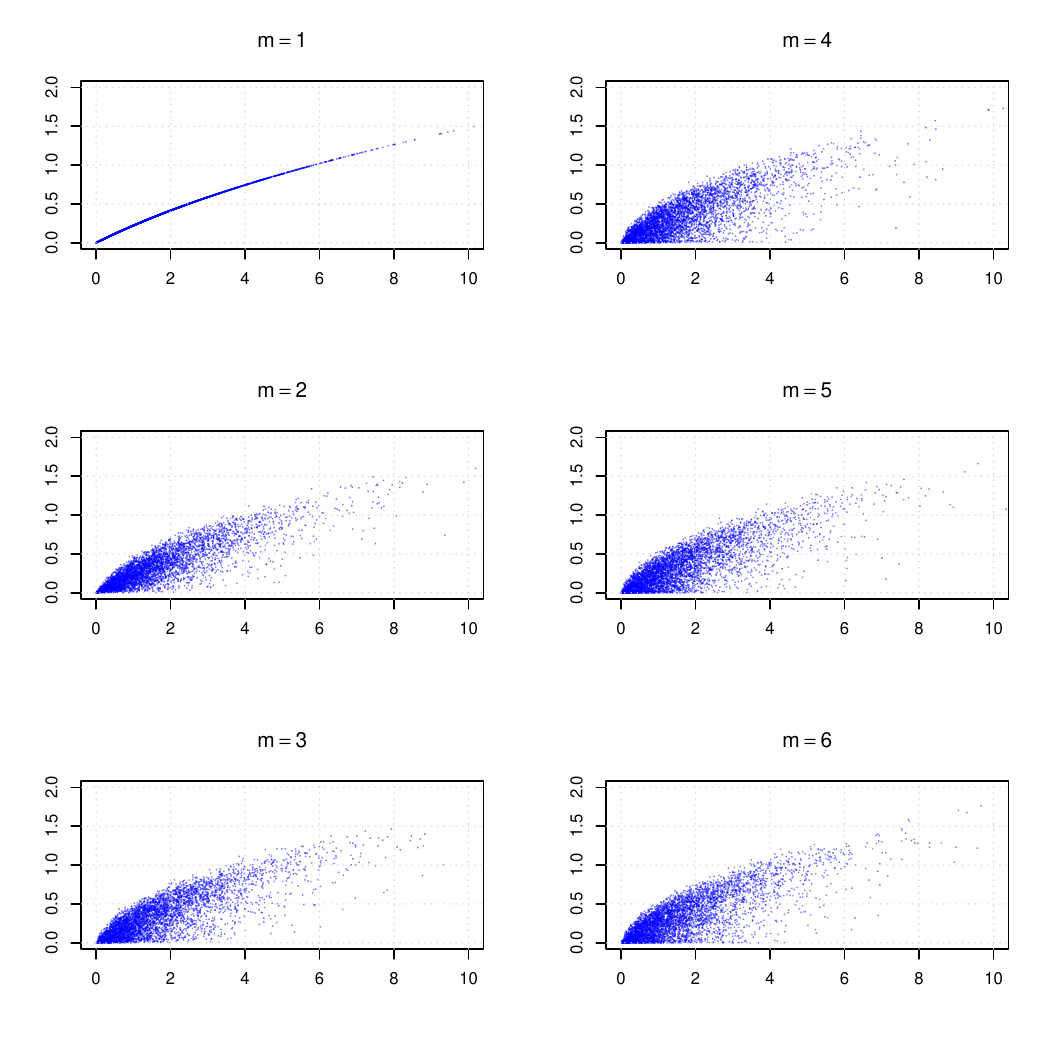}
\caption{Transformation $(x,y) \mapsto (x,y-x)$ applied to the graphs in Figure \ref{fig:comonotone-xy}.}
 \label{fig:comonotone-x-y-x}
\end{figure}

\end{appendix}


\end{document}